	\newcommand{\ZZ}{\mathbb{Z}}
	\renewcommand{\bar}{\overline}
	\newcommand{\Oo}{\mathscr{O}}	
	\DeclareMathOperator{\cone}{cone}
	\DeclareMathOperator{\Star}{Star}
	\DeclareMathOperator{\chara}{char}
	\DeclareMathOperator{\rank}{rank}
	\DeclareMathOperator{\Spec}{Spec}
	\DeclareMathOperator{\Proj}{Proj}
	\DeclareMathOperator{\Pic}{Pic}
	\DeclareMathOperator{\height}{ht}
	\DeclareMathOperator{\Div}{Div}
	\newcommand{\isom}{\simeq}
	\renewcommand{\phi}{\varphi}
	\renewcommand{\epsilon}{\varepsilon}
	\renewcommand{\tilde}{\widetilde}
	\theoremstyle{plain}
	\newtheorem{lemma}{Lemma}
	\newtheorem*{proposition*}{Proposition}
	\newtheorem*{corollary*}{Corollary}
	\newtheorem{theorem}{Theorem}
	\newtheorem*{theorem*}{Theorem}
	\newtheorem{obs}{Observation}
	\theoremstyle{definition}
	\theoremstyle{remark}
	\newtheorem*{remark}{Remark}
	\author{Piotr Achinger}
	\title{A characterization of toric varieties in characteristic $p$}
	\numberwithin{equation}{section}
\begin{document}

\maketitle

\begin{abstract}
If $X$ is a smooth toric variety over an algebraically closed field of positive characteristic and $L$ is an invertible sheaf on $X$, it is known that $F_* L$, the push-forward of $L$ along the Frobenius morphism of $X$, splits into a direct sum of invertible sheaves.
We show that this property characterizes smooth projective toric varieties.
\end{abstract}

\section*{Introduction}

If $X$ is a smooth toric variety over an algebraically closed field of positive characteristic and $L$ is an invertible sheaf on $X$, then $F_* L$, the push-forward of $L$ along the Frobenius morphism of $X$, splits into a direct sum of invertible sheaves \cite{Thomsen}, \cite{Bogvad}. 
In this article, we show that this property in fact characterizes smooth projective toric varieties.

\begin{theorem} \label{thm-iff}
Let $X$ be a projective connected scheme over an algebraically closed field $k$ of characteristic $p>0$. Then $X$ is a smooth toric variety if and only if, for every invertible sheaf $L$ on $X$, $F_* L$ is a direct sum of invertible sheaves.
\end{theorem}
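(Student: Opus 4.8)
The ``only if'' part is \cite{Thomsen}, \cite{Bogvad}, so the plan is to prove the converse: assume $F_*L$ is a direct sum of invertible sheaves for every invertible $L$. First I would note that $F_*\Oo_X$ is then locally free, hence flat over $\Oo_X$, so by Kunz's theorem $X$ is regular, and being of finite type over the perfect field $k$ it is smooth; as $X$ is connected it is an irreducible smooth projective variety, say of dimension $n$, and $\rank F_*\Oo_X = p^n$. Next I would observe that $X$ is Frobenius split: the unit $\Oo_X \to F_*\Oo_X$ is a subbundle (its local generator $1$ generates every fibre), and comparing global sections---$h^0(F_*\Oo_X)=1$---the unique invertible summand of $F_*\Oo_X$ carrying a section carries a nowhere-vanishing one, hence is $\Oo_X$, so the inclusion is split.

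Second, I would pin down $\Pic(X)$ and the positivity of $X$. Iterating the hypothesis gives that $F^e_*L$ is a sum of invertible sheaves for all $e$ and all $L$, and (comparing sections, using Frobenius-splitness) that $\Oo_X(D)$ is a summand of $F^e_*\Oo_X(p^eD)$ for every $D$. If $L$ is a summand of $F_*M$, then $\Hom(L,F_*M)=H^0(M\otimes L^{-p})$ and, by Grothendieck duality, $\Hom(F_*M,L)\cong H^0(L^{p}\otimes\omega_X^{1-p}\otimes M^{-1})$ are both nonzero; together with the Grothendieck--Riemann--Roch identity
\[
c_1(F_*M)=p^{\,n-1}\,c_1(M)+\tfrac{1}{2}\,p^{\,n-1}(p-1)\,c_1(\omega_X),
\]
these constraints determine the summands of every $F^e_*M$ numerically, force $-K_X$ to be pseudo-effective, and give $H^i(X,\Oo_X)=0$ for $i>0$; hence $\Pic(X)=\mathrm{NS}(X)$ is finitely generated, and I would check it is torsion-free, so $\Pic(X)\cong\ZZ^{\rho}$. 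The same positivity, together with Frobenius-splitness, should yield that $X$ is globally $F$-regular, whence by the theorem of Schwede and Smith $X$ is a Mori dream space: its Cox ring $\mathcal R(X)=\bigoplus_{D\in\Pic(X)}H^0(X,\Oo_X(D))$ is a finitely generated $\ZZ^{\rho}$-graded normal domain of dimension $\rho+n$.

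Third---the heart---I would reduce toricity to showing that $\mathcal R(X)$ is a polynomial ring, using the Cox-ring characterization of toric varieties (Cox; Hu--Keel; Berchtold--Hausen): if $\Spec\mathcal R(X)$ is an affine space, the torus $\Spec k[\Pic(X)]$ acts on it with $X$ as good quotient, so $X$ is toric. Choosing a linear form positive on the effective cone away from $0$ makes $\mathcal R(X)$ a connected $\mathbb N$-graded domain. Since $X$ is smooth, $\mathcal R(X)$ is regular away from the irrelevant ideal $\mathfrak m$; as the singular locus of a graded ring is $\mathbb G_m$-stable, hence empty or containing the vertex, $\mathcal R(X)$ is regular if and only if $\dim_k\mathfrak m/\mathfrak m^2=\dim\mathcal R(X)=\rho+n$, and in that case a minimal homogeneous generating set has $\rho+n$ elements, so the domain $\mathcal R(X)$ of dimension $\rho+n$ is polynomial. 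To establish this equality I would use the full force of the hypothesis: the decompositions of \emph{all} the sheaves $F^e_*\Oo_X(p^eD)$ into \emph{invertible} summands, always including $\Oo_X(D)$, endow $\mathcal R(X)$ with a Frobenius-compatible refinement of its $\Pic(X)$-grading; running Thomsen's computation of $F^e_*L$ on toric varieties backwards, this refinement should force $\mathcal R(X)$ to be the semigroup algebra of a saturated affine semigroup in $\ZZ^{\rho+n}$, necessarily free because $X$ is smooth---that is, a polynomial ring.

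The hard part will be exactly this last step: converting the abundance of Frobenius splittings into an honest polynomial presentation of $\mathcal R(X)$, equivalently, reconstructing the fan of $X$ from its Frobenius data. The purely local statement is false---$\mathbb A^1\setminus\{0,1\}$ has free $F_*\Oo$ but is not toric---so one cannot argue affine chart by affine chart; the rigidity must be imported globally, through the Cox ring or through the limiting polytope of the rescaled classes $p^{-e}c_1$ of the invertible summands of $F^e_*\Oo_X$, with Thomsen's explicit formula serving as the template that the decomposition of every $F^e_*L$ must match. This is where projectivity is indispensable.
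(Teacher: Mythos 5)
Your overall target is the right one --- reduce to showing the Cox ring is a polynomial ring and invoke the Cox-ring characterization of toric varieties --- and your opening steps (Kunz applied to $X$ to get smoothness, the global-sections count to get Frobenius splitting) coincide with the paper's Lemma \ref{sm-fs}. But the proposal has genuine gaps at exactly the points you flag as ``the hard part,'' and the paper's key idea is precisely the one you are missing: \emph{apply Kunz's criterion a second time, to the Cox ring itself}. Writing $R=\bigoplus_{\lambda\in\Lambda}H^0(X,L_\lambda)$, the projection formula gives
\[
\bigoplus_{\lambda}H^0(X,L_{\mu'+p\lambda})=\bigoplus_{\lambda}H^0\bigl(X,(F_*L_{\mu'})\otimes L_\lambda\bigr),
\]
and the hypothesis that $F_*L_{\mu'}$ is a sum of invertibles exhibits each such summand as a free $R^p$-module; hence $R$ is free over $R^p$. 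This single computation yields \emph{both} finite generation (homogeneous generators of $R$ over $R^p$ generate $R$ over $k$ because $\Lambda$ is not infinitely $p$-divisible) \emph{and}, via Kunz, regularity of $R$; a regular connected $\Lambda$-graded ring of this kind is then a polynomial ring by an elementary Nakayama/dimension argument. Your alternative route to finite generation --- ``Frobenius splitness plus positivity should yield global $F$-regularity, hence Mori dream space by Schwede--Smith'' --- is unsubstantiated: Frobenius splitting does not imply global $F$-regularity, and nothing in your setup establishes the latter. Likewise your plan to verify $\dim_k\mathfrak m/\mathfrak m^2=\rho+n$ by ``running Thomsen's computation backwards'' is a restatement of the goal, not an argument.

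Two further gaps. First, you assert $\Pic(X)\cong\ZZ^\rho$ but give no proof of torsion-freeness; the paper needs real work here: $p$-torsion is excluded by showing that a $p$-torsion class would force $p^r\mid\chi(L)$ for all $r$ and all $L$ (via $m(L'\otimes M,L)=m(L',L)$ and numerical invariance of $\chi$), contradicting ampleness, while prime-to-$p$ torsion is not excluded at all --- instead one passes to a finite \'etale cover $Y\to X$ killing it, runs the Cox-ring argument on $Y$, and descends using that a toric (hence separably rationally connected) variety is simply connected. Second, the Cox-ring characterization you cite (Cox, Hu--Keel, Berchtold--Hausen, K\k{e}dzierski--Wi\'sniewski) is stated in the literature in characteristic zero; since the whole theorem lives in characteristic $p$, one must check (as the paper does in a closing remark, via Luna's slice theorem for linearly reductive groups) that it survives in positive characteristic. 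Your GRR/duality computations of $c_1(F_*M)$ and the claim that they ``determine the summands numerically'' and force $H^i(X,\Oo_X)=0$ are also not needed and not obviously correct as stated; the paper gets everything it needs about $\Pic$ from Frobenius splitting (Joshi: $(\Pic^0X)_{red}$ is ordinary) together with the $p$-torsion argument above.
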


Although the ``only if'' part of this result has been known already, we include a short proof for completeness (Theorem \ref{thm-only-if}). The proof of the ``if'' part (Theorem \ref{thm-if}) is a bit more involved: we first reduce to the case when the Picard group of $X$ is free abelian and look at the Cox ring $R$ of $X$. The assumption on $X$ means that $R$ is flat over $R^p$, so $R$ is regular by Kunz's criterion. It follows that $R$ has to be a polynomial ring, so $X$ is a toric variety.

We may expect Theorem 1 to hold more generally for $X$ proper. However, our proof uses the characterization of toric varieties in terms of their Cox rings in the form stated in \cite{KedzierskiWisniewski} which requires $X$ to be projective. It is worth noting that similar characterizations have been obtained \cite[Corollary 4.4]{BerchtoldHausen} for $X$ satisfying the $A_2$ condition: every two points have a common affine neighborhood. These theorems, depending on various results on group actions, have been stated only in characteristic zero, therefore we have to be very careful applying them to actions of tori in positive characteristic.  

\section{Proof of the ``only if'' part}

We provide an a bit shorter and more explicit proof of Thomsen's result \cite{Thomsen}. The proof gives a very short computation even for projective spaces (in which case one can use the Horrocks splitting criterion and the projection formula to prove that the direct image is a direct sum of invertible sheaves). The key point of our approach is to consider Frobenius push-forwards of all invertible sheaves at once.

Let $X$ be a smooth toric variety over an algebraically closed field $k$. By definition, there is a torus $T$ acting on $X$ with an open free orbit, and $X$ is defined by a combinatorial structure called a fan \cite{CoxLittleSchenck}. We denote the number of rays of the fan defining $X$ by $r$. If $\chara k = p > 0$, we have the \emph{absolute} Frobenius morphism $F_a:X\to X$. In fact, as $X$ is naturally defined over $\mathbb{F}_p$, $X$ can be identified with its Frobenius twist $X^{(1)}$ and then $F_a$
can be seen as the quotient by the Frobenius kernel $K_a$ of $T$. In any case, for any integer $\ell>0$ we have the \emph{toric} Frobenius morphism $F_\ell : X\to X$ which corresponds to taking the quotient by the kernel $K_\ell$ of the $\ell$-th power map on $T$. Let $F$ be either $F_\ell$ or $F_a$ (with $\ell := p$ in the latter case) in the following theorem:

\begin{theorem}[{\cite[Theorem 1]{Thomsen}}] \label{thm-only-if}
Let $X$ and $F$ be as in the previous paragraph. Then for every invertible sheaf $L$ on $X$, $F_* L$ is a direct sum of invertible sheaves. Furthermore, we have the following explicit description of the direct summands: Let $D\in\Pic X$. Then
\begin{equation}\label{dec}
	 F_* \Oo_X(D) \isom \bigoplus_{E \in \Pic X} \Oo_X(E)^{\oplus m(E, D)}, 
\end{equation}
where the multiplicity $m(E, D)$ equals the number of points in the cube $\{0, 1, \ldots, \ell-1\}^r\subseteq \ZZ^r \isom \Div_T X$ representing the class $D-\ell E\in\Pic X$ (that is, the number of $T$-invariant divisors in $|D-\ell E|$ with coefficients less than $\ell$).
\end{theorem}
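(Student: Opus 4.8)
The plan is to reduce the theorem to an explicit computation on the standard affine charts of $X$. Since every invertible sheaf on the toric variety $X$ is isomorphic to $\Oo_X(D)$ for a $T$-invariant divisor $D = \sum_\rho a_\rho D_\rho$ (summed over the rays $\rho$), it suffices to prove \eqref{dec}. First I would dispose of the absolute Frobenius: as $X$ is defined over $\mathbb{F}_p$, the identification $X \isom X^{(1)}$ turns $F_a$ into the $k$-linear toric morphism $F_p$, so one may take $F = F_\ell$. On an affine toric chart $U_\sigma = \Spec k[\sigma^\dual \cap M]$ this $F$ is the ring map $\chi^m \mapsto \chi^{\ell m}$, i.e.\ the quotient by $K_\ell = \Ker(T \xrightarrow{\ell} T)$; in particular it is finite, and flat by miracle flatness (both schemes being regular of the same dimension), though flatness will also fall out of the computation.

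Next I would decompose $F_* \Oo_X(D)$ using the residue class of the torus character modulo $\ell M$. Over $U_\sigma$, the space $\Gamma(U_\sigma, F_*\Oo_X(D))$ is the $k$-span of those $\chi^m$ ($m\in M$) with $\langle m, u_\rho\rangle + a_\rho \ge 0$ for all $\rho \subseteq \sigma$, regarded as a $k[\sigma^\dual \cap M]$-module through $\chi^{m_0}\cdot \chi^m = \chi^{m + \ell m_0}$; grouping these by the class of $m$ in $M/\ell M$ yields a decomposition compatible with restrictions, hence a global decomposition $F_*\Oo_X(D) = \bigoplus_{\bar m \in M/\ell M} \Ff_{\bar m}$ (the isotypic components for the action of the diagonalizable group scheme $K_\ell$). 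Fixing once and for all a lift $m_{\bar m}\in M$ of each $\bar m$, the assignment $\chi^{m_{\bar m}+\ell m''}\mapsto \chi^{m''}$ identifies $\Ff_{\bar m}|_{U_\sigma}$ with $\Gamma(U_\sigma, \Oo_X(E_{\bar m}))$ for
\begin{equation*}
	E_{\bar m} \;=\; \sum_\rho \Bigl\lfloor \tfrac{a_\rho + \langle m_{\bar m},\, u_\rho\rangle}{\ell} \Bigr\rfloor D_\rho ,
\end{equation*}
since $\langle m_{\bar m}+\ell m'',\, u_\rho\rangle + a_\rho \ge 0 \iff \langle m'',\, u_\rho\rangle + \bigl\lfloor (a_\rho + \langle m_{\bar m}, u_\rho\rangle)/\ell \bigr\rfloor \ge 0$. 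This formula does not depend on $\sigma$, so the identifications glue; and since each $U_\sigma$ is smooth affine, $\Ff_{\bar m}$ is invertible. One thus gets $F_* \Oo_X(D) \isom \bigoplus_{\bar m \in M/\ell M} \Oo_X(E_{\bar m})$, which proves the first assertion (replacing the lift $m_{\bar m}$ alters $E_{\bar m}$ by a principal divisor, so its class in $\Pic X$ is well defined). Equivalently, writing $D = \ell E_0 + D'$ with $D'$ a cube point and using $F^*\Oo_X(E_0) = \Oo_X(\ell E_0)$ and the projection formula reduces everything at once to the finitely many $\Oo_X(D')$ with $D'$ in the cube; this is the ``all invertible sheaves simultaneously'' point of view, which also meshes with the Cox ring.

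Finally, to read off the multiplicities, I would group the summands by linear equivalence, so $m(E, D) = \#\{\,\bar m \in M/\ell M : E_{\bar m} \sim E\,\}$. As $\Pic X$ is torsion free for a smooth toric $X$, multiplication by $\ell$ on it is injective, so this equals $\#\{\,\bar m : D - \ell E_{\bar m} \sim D - \ell E\,\}$. From the formula above, $D - \ell E_{\bar m}$ is linearly equivalent to $\sum_\rho c_\rho D_\rho$ with $c_\rho \equiv a_\rho + \langle m_{\bar m}, u_\rho\rangle \pmod{\ell}$ and $0 \le c_\rho < \ell$, i.e.\ to a point of the cube $\{0, 1, \ldots, \ell-1\}^r$, and the resulting map $M/\ell M \to \{0, 1, \ldots, \ell-1\}^r$ is injective because $M/\ell M \hookrightarrow \ZZ^r/\ell\ZZ^r$. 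A brief diagram chase with the exact sequence $0 \to M \to \ZZ^r = \Div_T X \to \Pic X \to 0$ then shows its image contains every cube point lying in the class $D - \ell E$; hence $m(E, D)$ is exactly the number of such cube points, as claimed.

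I do not expect a genuine conceptual obstacle: this is why the computation stays short, even for $\PP^n$. The steps needing care are verifying that the local identifications are canonical enough to glue to a bona fide global decomposition of sheaves; the multiplicity count, where torsion freeness of $\Pic X$ is essential and where one must be scrupulous about sign and orientation conventions; and treating the absolute Frobenius in characteristic $p$, which is not $k$-linear and for which the identification $X \isom X^{(1)}$ is the remedy.
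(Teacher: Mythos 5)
Your proposal is correct, but the multiplicity computation follows a genuinely different route from the paper's. You share the paper's first step (decomposing $F_*\Oo_X(D)$ into eigensheaves for the diagonalizable kernel $K_\ell = \Spec k[M/\ell M]$, and disposing of $F_a$ via $X\isom X^{(1)}$), but from there you carry out Thomsen's original explicit local computation: on each chart $U_\sigma$ you identify the $\bar m$-isotypic piece with $\Oo_X(E_{\bar m})$ for $E_{\bar m}=\sum_\rho\lfloor(a_\rho+\langle m_{\bar m},u_\rho\rangle)/\ell\rfloor D_\rho$, check the gluing, and then count $\{\bar m: E_{\bar m}\sim E\}$ via the injection $M/\ell M\hookrightarrow \ZZ^r/\ell\ZZ^r$ (which does use that $\Pic X$ is torsion free and that $0\to M\to\Div_T X\to\Pic X\to 0$ is exact, i.e.\ no torus factors --- standard for smooth toric $X$, and implicitly assumed by the paper when it picks a basis of $\Pic X$). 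The paper instead never computes the individual summands: it only notes that the eigensheaves are invertible, then extracts the multiplicities by applying $h^0$ to the decomposition for \emph{all} $D$ at once and dividing the generating functions $S(x)/S(x^\ell)=\prod_i(1+x^{L(e_i)}+\cdots+x^{(\ell-1)L(e_i)})$, where $S$ counts effective $T$-divisors. Your approach is self-contained and works uniformly for non-complete $X$, whereas the generating-function identity needs $X$ complete (so that $h^0(D)$ is finite and equals the number of effective $T$-divisors in $|D|$) and the paper must then reduce the general case by completing the fan. The paper's approach buys a shorter multiplicity computation and sets up the ``all line bundles at once'' / Cox-ring point of view exploited in the converse direction; yours buys the explicit formula for each summand, which is more information than the theorem asks for. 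No gaps --- your argument that the image of $M/\ell M$ hits exactly the cube points in the class $D-\ell E$ is a correct diagram chase, since $\sum(c_\rho-a_\rho)D_\rho+\ell E=\operatorname{div}(\chi^m)$ forces $c_\rho\equiv a_\rho+\langle m,u_\rho\rangle \pmod\ell$ and $E=E_{\bar m}$ for that representative.
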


\begin{proof}
Let us first prove the theorem in the case when $X$ is complete, and reduce to this case afterwards. 

First of all, we remark that the push-forward $F_* \Oo_X(D)$ of a $T$-equivariant invertible sheaf $\Oo_X(D)$ is a direct sum of invertible sheaves. Indeed, the kernel $K$ of $F$ on $T$ (equal to $K_a$ or $K_\ell$) is a finite diagonalizable commutative group scheme $\Spec k[\Lambda/\ell\Lambda]$ (where $\Lambda$ is the character lattice of $T$) acting on $F_* \Oo_X(D)$ and the eigensheaves are invertible (the proof is immediately reduced to the case of an affine space and further to the case of the affine line). Since every invertible sheaf on $X$ is equivariant, we get a decomposition as in (\ref{dec}) and we only want to compute the multiplicities.

Observe that $m(E, D)$ depends only on $D - \ell E$: by the projection formula we have $(F_* \Oo_X(D))\otimes \Oo_X(-E) = F_*(\Oo_X(D - \ell E))$, so $m(E, D) = m(0, D-\ell E)$. Denote $m(0, D)$ simply by $m(D)$ and  apply $h^0(-)$ to both sides of (\ref{dec}):
\begin{equation} \label{h0}
	 h^0(D) 
	= \sum_{E \in \Pic X} m(E, D)\cdot h^0(E) = \sum_{E \in \Pic X} m(D - \ell E)\cdot h^0(E). 
\end{equation}

We want to use some generating functions, so we fix a basis $D_1, \ldots, D_\rho$ of $\Pic X$ such that the effective cone lies in the positive orthant and define
\[ S(x) = \sum_{a\in \ZZ^\rho} h^0\left(\sum a_i D_i\right) x^a \quad\text{and}\quad M(x) = \sum_{a\in \ZZ^\rho} m\left(\sum a_i D_i\right) x^a. \]
Then (\ref{h0}) just states that 
\[ S(x) = M(x)\cdot S(x^\ell) \]
where we write $x^\ell$ for $(x_1^\ell, \ldots, x_\rho^\ell)$.
 
Let us compute the series $S$. Consider the map $L : \ZZ^r = \Div_T X \to \Pic X = \ZZ^\rho$ (the first identification being given by the basis of ,,ray'' divisors, the second by $D_1, \ldots, D_\rho$) taking a $T$-divisor to its class. Because $h^0(D)$ equals the number of effective $T$-divisors linearly equivalent to $D$, we get
\[ S(x) = \sum_{b\in \ZZ_{\geq 0}^r} x^{L(b)} = \prod_{i=1}^r \frac{1}{1 - x^{L(e_i)}}, \]
$e_1, \ldots, e_r$ being the basis in $\ZZ^r$. Therefore
\[ M(x) = \frac{S(x)}{S(x^\ell)} = \prod_{i=1}^r \frac{1-x^{\ell L(e_i)}}{1-x^{L(e_i)}}
 = \prod_{i=1}^r \left(1 + x^{L(e_i)} + \ldots + x^{(\ell -1)L(e_i)}\right), \]
hence $m(D)$ is the number of points $p = \sum a_i e_i$ with $0 \leq a_i < \ell$ and $L(p) = D$.

\medskip
We turn to the case $X$ not necessarily complete. By adding extra cones, we embed $i: X\to \bar X$ into a smooth complete toric variety $\bar X$. Using the diagram
\[ 
\begin{tikzpicture}[description/.style={fill=white,inner sep=2pt}]
    \matrix (m) [matrix of math nodes, row sep=1.0em,
    column sep=1.0em, text height=1.5ex, text depth=0.25ex]
    {    &    & 0                       & 0\\
         &    & \ZZ^{\bar r - r} & \ZZ^{\bar r - r} \\
       0 & M  & \Div_T \bar X           & \Pic \bar X  & 0 \\
       0 & M  & \Div_T X           & \Pic X & 0 \\ 
         &    & 0                         & 0 \\};
    \path[double]
    (m-2-3) edge[double distance=2pt] node[auto] {$ $} (m-2-4);

    \path[->,font=\scriptsize]
    (m-3-1) edge node[auto] {$ $} (m-3-2)
    (m-3-2) edge node[auto] {$ $} (m-3-3)
    (m-3-3) edge node[auto] {$ $} (m-3-4)
    (m-3-4) edge node[auto] {$ $} (m-3-5);
    \path[->,font=\scriptsize]
    (m-4-1) edge node[auto] {$ $} (m-4-2)
    (m-4-2) edge node[auto] {$ $} (m-4-3)
    (m-4-3) edge node[auto] {$ $} (m-4-4)
    (m-4-4) edge node[auto] {$ $} (m-4-5);
    \path[double]
    (m-3-2) edge[double distance=2pt] node[auto] {$ $} (m-4-2);
    \path[->,font=\scriptsize]
    (m-1-3) edge node[auto] {$ $} (m-2-3)
    (m-2-3) edge node[auto] {$ $} (m-3-3)
    (m-3-3) edge node[auto] {$ $} (m-4-3)
    (m-4-3) edge node[auto] {$ $} (m-5-3);
    \path[->,font=\scriptsize]
    (m-1-4) edge node[auto] {$ $} (m-2-4)
    (m-2-4) edge node[auto] {$ $} (m-3-4)
    (m-3-4) edge node[auto] {$ $} (m-4-4)
    (m-4-4) edge node[auto] {$ $} (m-5-4);
\end{tikzpicture}
\]
(where $M$ is the character lattice of $T$) we see that if the theorem holds for $\bar X$, then it also holds for $\tilde X$ and for $X$ (the variety $X$ being normal).
\end{proof}

\subsection{Vanishing of cohomology of invertible sheaves -- a result of Borisov and Hua}

Using the above description of Frobenius push-forwards of invertible sheaves on toric varieties, one can easily calculate the ranks of cohomology groups of invertible sheaves (\cite{BorisovHua}). The idea is as follows: since the Frobenius maps are affine, we have $H^i(X, L) = H^i(X, F_{\ell*} L)$, and using Theorem \ref{thm-iff} we can see that for $\ell$ large enough $F_{\ell*} L$ is a direct sum of invertible sheaves from a fixed \emph{finite} subset of $\Pic X$. Before we state the result, we need some definitions.

For a subset $I$ of the set $\Delta(1)$ of boundary divisors of $X$ (rays of $\Delta$), let $D_I = \bigcup_{i\in I} D_i$ and $L_I = \Oo_X(-D_I)$ (the ideal sheaf of $D_i$). Let $K \subseteq \Pic(X)_\mathbb{Q}$ be the convex hull of the classes $[L_I]$ for all $I\subseteq \Delta(1)$. We call a subset $I\subseteq \Delta(1)$ \emph{extremal} if $[L_I]$ is a vertex of $K$. 
Let 
\[ B_k = \{ I\subseteq\Delta(1)\, |\, I\text{ is extremal and }H^k(X, L_I)\neq 0 \}, \quad k=0, 1, \ldots, \dim X. \] 
For $I\subseteq \Delta(1)$, let $C_I$ be the translated cone
\[ C_I = [L_I] + \cone([L_I] - [L_J]\, J\subseteq \Delta(1)) = 
[L_I] + \cone([L_I] - [L_J]\, J\subseteq \Delta(1)\text{ is extremal}). \]
and finally let
\[ A_k = \{ [L]\in\Pic X \, | \, H^k(X, L)\neq 0\}, \quad k=0, 1, \ldots, \dim X. \]

\begin{theorem}[{\cite[Section 4]{BorisovHua}}]\label{thm-bh} We have $A_k = \bigcup_{I\in B_k} C_I$. 
\end{theorem}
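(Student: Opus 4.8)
The plan is to combine Theorem \ref{thm-only-if} with a combinatorial analysis of which invertible sheaves can appear as summands of $F_{\ell*}L$. First I would use the fact that the toric Frobenius $F_\ell$ is affine, so $H^k(X,L)=H^k(X,F_{\ell*}L)$ for all $k$; applying the decomposition \eqref{dec} gives $H^k(X,L)=\bigoplus_{E}H^k(X,\Oo_X(E))^{\oplus m(E,L)}$, where $E$ ranges over classes with $L-\ell E$ representable by a $T$-divisor with coefficients in $\{0,\dots,\ell-1\}^r$. The key observation is that $m(E,L)\neq 0$ forces $E$ to lie in the translate $\tfrac1\ell(L - [\text{cube}])$, and as $\ell\to\infty$ these classes, rescaled, fill out a fixed polytope; more precisely, after dividing by $\ell$, the sheaves $\Oo_X(E)$ that occur have $[E]$ approaching the convex hull of the classes $[L_I]=[\Oo_X(-D_I)]$ for $I\subseteq\Delta(1)$, since choosing the coefficient of each ray divisor to be $0$ or $\ell-1$ corresponds (in the limit) to the vertices $[L_I]$ of $K$. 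This shows that $[L]\in A_k$ iff for $\ell\gg 0$ some summand $\Oo_X(E)$ of $F_{\ell*}\Oo_X(L)$ has $H^k(X,\Oo_X(E))\neq 0$, i.e. $[E]\in A_k$, with $[E]$ constrained to the rescaled polytope.

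Next I would set up the generating-function bookkeeping exactly as in the proof of Theorem \ref{thm-only-if}: the multiplicities $m(E,L)$ are the coefficients of $M_\ell(x)=\prod_{i=1}^r(1+x^{L(e_i)}+\dots+x^{(\ell-1)L(e_i)})$, so the set of $E$ with $m(E,L)\neq 0$ is precisely $\{\,L - \sum_i a_i L(e_i) : 0\le a_i<\ell\,\}$, which is the set of lattice points in the polytope $L - \ell\cdot(\text{convex hull of the }L(e_i))$ shifted appropriately — and the extreme directions of this polytope are exactly the cones $\cone([L_I]-[L_J])$ appearing in the definition of $C_I$. Combining this with the first step: $H^k(X,\Oo_X(L))\neq 0$ iff there exists $E$ with $m(E,L)\neq 0$ and $H^k(X,\Oo_X(E))\neq 0$ for every (equivalently, for arbitrarily large) $\ell$; letting $\ell$ vary and taking $E$ to approach a vertex $[L_I]$ of $K$ shows $L\in A_k$ iff $L$ lies in one of the translated cones $C_I$ with $I$ extremal and $H^k(X,L_I)\neq 0$, which is the asserted equality $A_k=\bigcup_{I\in B_k}C_I$.

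The two inclusions then require slightly different arguments. For $\bigcup_{I\in B_k}C_I\subseteq A_k$: given $I\in B_k$ and $L\in C_I$, I would exhibit, for suitable $\ell$, a summand of $F_{\ell*}\Oo_X(L)$ isomorphic to $\Oo_X(L_I)$ (up to a class with no effect on the relevant cohomology), using that $L - [L_I]$ lies in the cone generated by the $[L_I]-[L_J]$ and hence can be written with nonnegative coefficients summing to something divisible-friendly for large $\ell$; the nonvanishing $H^k(X,L_I)\neq 0$ then propagates to $H^k(X,L)$. For $A_k\subseteq\bigcup_{I\in B_k}C_I$: if $H^k(X,L)\neq 0$, pick $\ell$ large enough that all summands $\Oo_X(E)$ of $F_{\ell*}\Oo_X(L)$ have their classes lying in a fixed finite set near the vertices of $K$; one such $E$ must have $H^k(X,\Oo_X(E))\neq 0$, and $E$ being a near-vertex point forces $E$ to actually equal some $L_I$ with $I$ extremal (here one uses that $L_I$ is a vertex of $K$, not merely on the boundary, to pin down $E$ exactly), whence $I\in B_k$ and $L\in C_I$ by tracing the cone relation backwards.

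I expect the main obstacle to be making the limiting argument precise: controlling, uniformly in $\ell$, exactly which classes $[E]$ with $m(E,L)\neq 0$ can carry nonzero $H^k$, and showing that as $\ell\to\infty$ these are forced to be exactly the extremal $[L_I]$ rather than other boundary points of $K$. This is where the hypothesis that $I$ be \emph{extremal} (a vertex of $K$) is essential, and where one must be careful that a class on a non-extremal face does not sneak in; the fix is the observation that $\Pic X$ is discrete, so for $\ell\gg 0$ the rescaled cube points that are not near a vertex of $K$ contribute classes $E$ whose cohomology vanishes by a Serre-type argument, leaving only the $L_I$.
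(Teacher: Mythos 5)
Your overall strategy---pushing forward along the affine toric Frobenius, using the decomposition of Theorem \ref{thm-only-if} to write $H^k(X,L)=\bigoplus_E H^k(X,\Oo_X(E))^{\oplus m(E,L)}$, and observing that for $\ell\gg 0$ the summand classes are lattice points of (approximately) $\tfrac{\ell-1}{\ell}K+\tfrac1\ell[L]$---is the same as the paper's, and your argument for the inclusion $\bigcup_{I\in B_k}C_I\subseteq A_k$ is essentially correct: for $[L]\in C_I$ one chooses $\ell$ with $[L_I]\in\tfrac{\ell-1}{\ell}K+\tfrac1\ell[L]$, so that $L_I$ occurs as a summand of $F_{\ell*}\Oo_X(L)$ and $H^k(L)\supseteq H^k(L_I)\neq 0$; this matches the paper's identity $C_I=\bigcup_{\ell\gg 0}\bigl([L_I]+(\ell-1)([L_I]-K)\bigr)$.

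The gap is in the reverse inclusion, precisely at the point you flag as ``the main obstacle.'' You assert that for $\ell\gg 0$ the summands of $F_{\ell*}\Oo_X(L)$ lie ``near the vertices of $K$'' and that such a summand must equal some $L_I$ with $I$ extremal; this is false. The summand classes are lattice points of a polytope close to $K$ and can sit anywhere in $K$---at non-extremal classes $[L_J]$, or at interior lattice points---so discreteness of $\Pic X$ does not push them toward the vertices. Your proposed fix, that the non-vertex summands have vanishing cohomology ``by a Serre-type argument,'' does not work: these are finitely many fixed sheaves, not high twists by an ample class, and no Serre-type vanishing applies. The missing idea is the paper's key lemma: if $J\subseteq\Delta(1)$ is \emph{not} extremal, then \emph{all} cohomology of $L_J$ vanishes. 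Its proof is a short self-referential trick rather than a limiting argument: choose an extremal $I$ with $[L_J]\in\Star_K([L_I])$; then for $\ell\gg 0$ both $L_I$ and $L_J$ occur as direct summands of $F_{\ell*}L_I$, and since $F_\ell$ is affine, $h^k(L_I)=h^k(F_{\ell*}L_I)\geq h^k(L_I)+h^k(L_J)$, forcing $h^k(L_J)=0$. It is this vanishing statement (together with the description of $\Star_K([L_I])$ as $\bigcup_{\ell\gg 0}\bigl([L_I]+\tfrac{\ell-1}{\ell}(K-[L_I])\bigr)$) that lets one discard all non-extremal contributions and reduce $[L]\in A_k$ to the existence of an extremal $I$ with $H^k(L_I)\neq 0$ and $[L_I]\in\tfrac{\ell-1}{\ell}K+\tfrac1\ell[L]$, i.e.\ $[L]\in C_I$. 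Without this lemma your chain of equivalences does not close.
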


\begin{proof} Theorem \ref{thm-iff} states that $\Oo_X(E)$ is a direct summand of $F_{\ell*} \Oo_X(D)$ if and only if 
\begin{equation}\label{eqn-bla} [E] \in \frac{\ell-1}{\ell}K + \frac{1}{\ell}[D]. \end{equation}
In particular, given $D$, we have that all $E$ in $F_{\ell*} \Oo_X(D)$ lie in $K$ for $\ell\gg 0$. Since $F_{\ell}$ is affine, we have $[D]\in A_k$ if and only if $[E]\in A_k$ for some $E$ such that $\Oo_X(E)$ appears in $F_{\ell*} \Oo_X(D)$. 

Recall that if $P$ is a convex polytope and $v$ is a vertex of $P$, the \emph{star} of $v$ (denoted $\Star_P(v)$) is the open subset of $P$ consisting of interiors of all faces of $P$ containing $v$. Note that
\[ \Star_P(v) = \bigcup_{\ell\gg 0} \left(v + \frac{\ell-1}{\ell}(P-v)\right).\]
The above equality together with \eqref{eqn-bla} show that if $I\subseteq \Delta(1)$ is extremal and $J\subseteq \Delta(1)$ is such that $[L_J] \in \Star_K([L_I])$ then $L_J$ is a direct summand of $F_{\ell*} L_I$ for $\ell\gg 0$. 

We claim that if $J\subseteq \Delta(1)$ is \emph{not} extremal then all the cohomology groups of $L_J$ vanish. For this, we choose an extremal $I\subseteq \Delta(1)$ such that $[L_J]\in \Star_K([L_I])$ and and $\ell>0$ such that $L_J$ is a direct summand of $F_{\ell*} L_I$. Since $L_I$ is a direct summand of $F_{\ell*} L_I$ as well and the push-forward does not change cohomology, we see that all the cohomology groups of $L_J$ have to be zero.

Fix a $D\in \Pic X$ and choose an $\ell\gg 0$ such that $(K+ \frac{1}{\ell}D)\cap \Pic X \subseteq K \cap \Pic X$. Then 
\begin{align*}
[D]\in A_k & \text{ iff }H^k(X, \Oo_X(D)) \neq 0 \\
            &  \text{ iff }H^k(X, \Oo_X(E))\neq 0 \text{ for some }E\text{  such that }[E] \in \frac{\ell-1}{\ell}K + \frac{1}{\ell}[D] \\ 
            & \text{ iff }H^k(X, L_I) \neq 0\text{ for some } I\subseteq \Delta(1)\text{ s.t. }[L_I] \subseteq \frac{\ell-1}{\ell}K + \frac{1}{\ell}[D] \\
            & \text{ iff }H^k(X, L_I) \neq 0\text{ for some extremal } I\subseteq \Delta(1)\text{ s.t. }[L_I] \subseteq \frac{\ell-1}{\ell}K + \frac{1}{\ell}[D] \\
            &\text{ iff }H^k(X, L_I) \neq 0\text{ for some extremal } I \text{ s.t.}[D] \in [L_I] + (\ell-1)([L_I] - K).
\end{align*} 
To finish the proof, it suffices to note that
\[  C_I = \bigcup_{\ell\gg 0} \left([L_I] + (\ell-1)([L_I] - K)\right). \qedhere \]
\end{proof}

\begin{remark} With some additional care, we can recover the dimensions of cohomology groups $h^k(D) := \dim H^k(X, \Oo_X(D))$. Recall that $B_k$ is set of all $I\in\Delta(1)$ such that $H^k(X, L_I)\neq 0$. Note that any $I\in B_k$ is extremal and that for such $I$, the number $\mu(D, I)$ of copies of $L_I$ in $F_{\ell*} \Oo_X(D)$ is independent of $\ell$ for $\ell\gg 0$. We have
\[ h^k(D) = \sum_{I\in B_k} h^k(L_I)\cdot \mu(D, I) \]
\[ = \sum_{I\in B_k} h^k(L_I)\cdot (\text{coeff. of }x^{D-\ell L_I}\text{ in }\prod_i(1+x^{L(e_i)} + \ldots + x^{(\ell-1)L(e_i)})). \] 
As $L_I$ are the ideal sheaves of the reduced boundary divisors $D_I$ and $h^i(\Oo_X) = 0$ for $i>0$, we have $h^i(L_I) = h^{i-1}(\Oo_{D_I})$ for $i>1$,  $h^1(L_I) = h^0(\Oo_{D_I}) - 1$ for $I\neq\emptyset$, $h^0(L_I) = 0$ for $I\neq \emptyset$. The cohomology groups of $\Oo_{D_i}$ can be computed using an explicit simplicial complex.
\end{remark}

\section{Proof of the ``if'' part}

Let $k$ be an algebraically closed field of characteristic $p>0$. 

\begin{theorem} \label{thm-if} 
Let $X$ be projective and connected scheme over $k$. Suppose that $X$ satisfies the following condition: for every invertible sheaf $L$ on $X$, the push-forward $F_* L$ of $L$ along the (absolute) Frobenius morphism of $X$ is a direct sum of invertible sheaves. Then $X$ is a smooth toric variety.
\end{theorem}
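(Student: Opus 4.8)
The plan is to read the hypothesis, through the Cox ring of $X$, as a flatness property of the Frobenius, deduce regularity from Kunz's criterion, and then invoke the characterization of projective toric varieties by their Cox rings.

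\emph{Preliminary reductions.} Taking $L = \Oo_X$, the sheaf $F_*\Oo_X$ is locally free, so by Kunz's theorem the Frobenius of $X$ is flat and $X$ is regular; as $k$ is perfect, $X$ is smooth over $k$, and being connected it is an integral smooth projective variety, say of dimension $n$. I would then reduce to the case that $\Pic X$ is free of finite rank $\rho$. This is the first delicate point: one has to check that the hypothesis forbids torsion in $\Pic X$ and forces it to be finitely generated. The relevant inputs are that $\Oo_X(E)$ can be a summand of $(F^r)_*\Oo_X(D)$ only when $D - p^r E$ is effective — because then $\Hom\big((F^r)^*\Oo_X(E), \Oo_X(D)\big) = H^0\big(\Oo_X(D - p^r E)\big) \neq 0$ — and, dually (using $(F^r)_*\omega_X \isom \Hom_{\Oo_X}((F^r)_*\Oo_X, \omega_X)$), that $p^r E - D - (p^r-1)K_X$ is effective; combined with the compatibility of the absolute Frobenius with finite (étale) morphisms, $\pi\circ F_{X'} = F_X\circ\pi$.

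\emph{The Cox ring.} Assume now $\Pic X \isom \ZZ^\rho$. Form the Cox ring $R = \bigoplus_{D\in\Pic X} H^0(X, \Oo_X(D))$, a $\Pic X$-graded $k$-algebra on which the torus $T = \Hom(\Pic X, \mathbb{G}_m)$ acts, and let $\pi\colon \hat X = \Spec_X \bigoplus_D \Oo_X(D) \to X$ be the universal $T$-torsor. As $X$ is projective and connected (so $\Oo_X^\times = k^\times$ and $H^0(\Oo_X)=k$), the scheme $\hat X$ is a smooth quasi-affine variety with $\Gamma(\hat X, \Oo_{\hat X}) = R$, the map $\hat X \to \Spec R$ is an open immersion whose complement has codimension $\geq 2$, and $X$ is the good quotient of $\hat X$ by $T$; moreover $R$ is a normal domain with $\Cl R = \Pic \hat X = 0$, hence a UFD. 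One also has to see that $R$ is a finitely generated $k$-algebra — equivalently, that $X$ is a Mori dream space — which is again a point requiring the hypothesis and which can be extracted from the same effectivity constraints.

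\emph{Frobenius flatness and conclusion.} Writing $F$ for the absolute Frobenius and using $\pi\circ F_{\hat X} = F_X\circ\pi$ together with $\pi_*\Oo_{\hat X} = \bigoplus_D \Oo_X(D)$, one obtains
\[ \pi_*\big(F_{\hat X*}\Oo_{\hat X}\big) = F_{X*}\,\pi_*\Oo_{\hat X} = \bigoplus_{D\in\Pic X} F_{X*}\Oo_X(D). \]
By hypothesis each $F_{X*}\Oo_X(D)$ is a direct sum of invertible sheaves; sorting the summands by their class in $\Pic X/p\Pic X$ and passing to global sections (that is, to the affine hull $\Spec R$), this translates into the assertion that $R$, regarded as a module over its subring $R^p$ of $p$-th powers (which is isomorphic to $R$), is free of rank $p^{n+\rho}$ — in other words, the Frobenius of $\Spec R$ is flat. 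By Kunz's criterion $\Spec R$ is regular. A finitely generated, $\ZZ^\rho$-graded, regular $k$-algebra that is a domain with degree-zero part $k$ and with homogeneous support in a pointed cone (the effective cone of $X$) must be a polynomial ring, since its minimal homogeneous generators then form a regular system of parameters at the irrelevant maximal ideal and hence satisfy no relation. Thus $R$ is a polynomial ring in $n + \rho$ variables, and by the Cox-ring characterization of projective toric varieties in \cite{KedzierskiWisniewski}, $X$ is a smooth toric variety.

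\emph{The main obstacle.} The genuinely difficult point is not the chain of implications above but its legitimacy in characteristic $p$: the characterization \cite{KedzierskiWisniewski}, like \cite[Corollary 4.4]{BerchtoldHausen}, is stated and proved over $\mathbb{C}$ and leans on the theory of good quotients by reductive groups. One must therefore reexamine, over $k$ of characteristic $p$, that the $T$-action on $\Spec R = \mathbb{A}^{n+\rho}$ reconstructs a fan whose toric variety is $X$, that the quotient has the expected local structure, and that nothing breaks when $p$ divides orders of stabilizer subgroups — and likewise that the reduction step (freeness and finite generation of $\Pic X$, and finite generation of $R$) is valid in positive characteristic. The regularity input itself — the passage through Kunz's criterion — is robust in characteristic $p$ and is precisely what powers the characterization.
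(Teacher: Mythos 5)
Your skeleton coincides with the paper's: smoothness of $X$ via Kunz applied to the locally free sheaf $F_*\Oo_X$, then the section (Cox) ring $R$, freeness of $R$ over $R^p$ via the projection formula and the splitting hypothesis, Kunz again to get regularity of $R$, the lemma that a regular $\Lambda$-graded domain with $R_0=k$ and irrelevant maximal ideal is a polynomial ring, and finally the Kedzierski--Wi\'sniewski characterization (with exactly the caveat you raise about characteristic $p$, which the paper discharges in a closing remark using Luna's slice theorem for linearly reductive groups). One small point: your claim that finite generation of $R$ ``can be extracted from the same effectivity constraints'' is vaguer than what is needed; the paper's argument is that $R$ is a \emph{graded} free $R^p$-module, so finitely many homogeneous generators of $R$ over $R^p$ generate $R$ as a $k$-algebra because $\Lambda$ is not infinitely $p$-divisible.

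The genuine gap is the step you flag as delicate and then dispatch with effectivity constraints: the reduction to $\Pic X$ free of finite rank. The constraint ``$\Oo_X(E)$ a summand of $F^r_*\Oo_X(D)$ forces $D-p^rE$ effective'' does not rule out $p$-torsion: if $pE\sim 0$ then $-pE$ is linearly trivial, hence effective, so such $E$ pass your test and can genuinely occur as summands of $F_*\Oo_X$ --- on an ordinary abelian variety $F_*\Oo_X$ is precisely the direct sum of all $p$-torsion line bundles. Nor does your mechanism address $\Pic^0$ or prime-to-$p$ torsion. The paper's actual route is: (i) the splitting of $F_*\Oo_X$ forces $X$ to be Frobenius split (an extra argument counting $h^0$ of the summands), hence $(\Pic^0 X)_{red}$ is an \emph{ordinary} abelian variety; (ii) a $p$-torsion line bundle $M$ would give $m(L'\otimes M,L)=m(L',L)$ for all $L,L'$, hence $p\mid\chi(L)$ for every invertible $L$, hence $p^r\mid\chi(L)$ for all $r$ by iterating, hence $\chi\equiv 0$, contradicting projectivity (asymptotic vanishing for powers of an ample sheaf); this kills $p$-torsion and, via ordinarity, kills $\Pic^0$; (iii) prime-to-$p$ torsion is \emph{not} excluded a priori: one passes to a cyclic \'etale cover $Y\to X$ killing it, runs the whole Cox-ring argument on $Y$, and only at the very end concludes $Y=X$ because the toric $Y$ is separably rationally connected, hence simply connected. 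Without some version of (i)--(iii), your ``Assume now $\Pic X\isom\ZZ^\rho$'' is unjustified, and this is exactly where the projectivity hypothesis (rather than mere properness) enters the proof.
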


The following two lemmas show that $\Pic X$ is finitely generated and $p$-torsion free. Recall that a scheme $X$ of characteristic $p$ is called \emph{Frobenius split} if the canonical map $F^\sharp : \Oo_X\to F_* \Oo_X$ is a split monomorphism.

\begin{lemma} \label{sm-fs} 
Let $X$ be proper scheme over $k$. Suppose that $F_* \Oo_X$ is a direct sum of invertible sheaves. Then $X$ is smooth and Frobenius split. 
\end{lemma}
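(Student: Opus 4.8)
The plan is to deduce smoothness from Kunz's criterion and to prove the Frobenius splitting by a direct argument, using the direct-sum hypothesis together with the identity $H^0(X, F_*\Oo_X) = H^0(X, \Oo_X)$. For smoothness: since $X$ is of finite type over the perfect field $k$, the (absolute) Frobenius $F$ is a finite morphism, so $F_*\Oo_X$ is a coherent $\Oo_X$-module; by hypothesis it is a (necessarily finite) direct sum of invertible sheaves, hence locally free, so $F$ is flat. By Kunz's criterion every local ring $\Oo_{X,x}$ is then regular, and as $k$ is perfect this means $X$ is smooth over $k$. In particular $X$ is regular, so each connected component, being regular and connected, is integral. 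Since the hypothesis and both conclusions are compatible with passing to connected components, we may assume from now on that $X$ is integral; then $H^0(X, \Oo_X) = k$.

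For the Frobenius splitting, write $F_*\Oo_X = \bigoplus_{i\in I} L_i$ with each $L_i$ invertible, and let $\iota_i\colon L_i\hookrightarrow F_*\Oo_X$ and $\pi_i\colon F_*\Oo_X\twoheadrightarrow L_i$ be the inclusion and projection maps of this decomposition. Because the absolute Frobenius is a homeomorphism of underlying spaces, $H^0(X, F_*\Oo_X) = H^0(X,\Oo_X) = k$; hence $\bigoplus_i H^0(X, L_i)$ is one-dimensional, so there is a unique index $0\in I$ with $H^0(X, L_0)\neq 0$, while $H^0(X, L_i) = 0$ for $i\neq 0$. The map $F^\sharp\colon\Oo_X\to F_*\Oo_X$ corresponds to the global section $F^\sharp(1)$, whose $i$-th component $s_i := \pi_i\circ F^\sharp$ is a section of $L_i$ and so vanishes for $i\neq 0$; therefore $F^\sharp = \iota_0\circ s_0$ for some $s_0\colon\Oo_X\to L_0$. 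I claim $s_0$ is an isomorphism, for which it is enough to see that $s_0$ is nowhere vanishing, and since $\iota_0$ stays injective after $\otimes_{\Oo_X} k(x)$, it is enough to see that $F^\sharp\otimes_{\Oo_X} k(x)\neq 0$ for every $x\in X$. But $F^\sharp(1) = 1$, and $1$ does not lie in $\mathfrak{m}_x\cdot(F_*\Oo_X)_x$, since the latter is the ideal of $\Oo_{X,x}$ generated by $\{a^p : a\in\mathfrak{m}_x\}$, which is contained in the proper ideal $\mathfrak{m}_x$. Hence $s_0$ is an isomorphism $\Oo_X\xrightarrow{\isom} L_0$, and $r := s_0^{-1}\circ\pi_0\colon F_*\Oo_X\to\Oo_X$ satisfies $r\circ F^\sharp = s_0^{-1}\circ\pi_0\circ\iota_0\circ s_0 = \mathrm{id}_{\Oo_X}$; thus $F^\sharp$ is a split monomorphism and $X$ is Frobenius split.

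The substantive point is the splitting half, and within it the observation that $H^0(X, F_*\Oo_X)\isom k$ forces the unit section $F^\sharp(1)$ into a single invertible summand $L_0$; after that, non-vanishing of $s_0$ is a one-line computation. The smoothness half is essentially a citation of Kunz's theorem; the only things needing a moment's attention are that $F$ be finite (so that $F_*\Oo_X$ really is a finitely generated locally free sheaf) and the reduction to the integral case that makes $H^0(\Oo_X) = k$.
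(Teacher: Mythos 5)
Your proof is correct and follows essentially the same route as the paper: Kunz's criterion for smoothness, and the count $h^0(F_*\Oo_X)=h^0(\Oo_X)=1$ to show that $F^\sharp$ factors through a single invertible summand $L_0$. The one genuine difference is the last step: the paper deduces that the induced map $\Oo_X\to L_0$ is an isomorphism by observing that $F^\sharp$ is \emph{locally} split (citing the Frobenius splitting of smooth affine varieties), whereas you verify directly that the unit section $F^\sharp(1)=1$ is nonzero in every fibre because $\mathfrak{m}_x\cdot(F_*\Oo_X)_x$ is the ideal generated by the $p$-th powers of $\mathfrak{m}_x$, hence proper; this is more elementary and self-contained. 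One small caveat: the version of Kunz's theorem quoted in the paper applies to \emph{reduced} local rings, so to invoke it in that form you should add the paper's one-line reducedness check (if $f^p=0$ then $f$ annihilates the faithfully flat module $F_*\Oo_X$, so $f=0$), or else appeal to the statement of Kunz's theorem without the reducedness hypothesis, which is also standard. Your reduction to connected components to justify $H^0(X,\Oo_X)=k$ is a welcome bit of care that the paper leaves implicit.
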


\begin{proof}
Since $F_* \Oo_X$ is a direct sum of invertible sheaves, it is locally free, so $F$ is a finite flat morphism. We check that $X$ is reduced: if $f\in \Oo_X$ and $f^p=0$, then $f$ acts trivially on the locally free $\Oo_X$-module $F_* \Oo_X$, so $f=0$. Therefore $X$ is regular by Kunz's criterion \cite{Kunz}: \emph{a reduced local ring $R$ of characteristic $p>0$ is regular if and only if it is flat over $R^p$}.

To prove the second assertion, first note that the map $F^\sharp: \Oo_X\to F_* \Oo_X$ is locally split (because smooth affine varieties are Frobenius split: see e.g. \cite[Proposition 1.1.6]{BrionKumar}). Let $F_* \Oo_X = \bigoplus_i L_i$ where the $L_i$ are invertible sheaves. We have $\sum_i \dim_k H^0(X, L_i) = \dim_k H^0(X, F_* \Oo_X) = \dim_k H^0(X, \Oo_X) = 1$, so we can assume that $H^0(X, L_0) \isom k$ and $H^0(X, L_i) = 0$ for $i\neq 0$. It follows that $F^\sharp$ factors through the inclusion $L_0 \hookrightarrow F_* \Oo_X$. The induced map $\Oo_X\to L_0$ is a locally split homomorphism between invertible sheaves, hence an isomorphism. 
\end{proof}

\begin{lemma} \label{noptorsion} 
Let $X$ be as in Theorem \ref{thm-if}. Then $(\Pic^0 X)_{red} = 0$ and $\Pic X$ has no $p$-torsion. 
\end{lemma}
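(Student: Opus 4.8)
The plan is to prove both assertions by exploiting the hypothesis on \emph{all} invertible sheaves simultaneously, combined with the fact (from Lemma~\ref{sm-fs}) that $X$ is already known to be smooth and Frobenius split.

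First I would handle $(\Pic^0 X)_{red}$. If this reduced group scheme were nonzero, it would be a nontrivial abelian variety $A$ of some positive dimension, and $\Pic^0 X$ would contain $p$-torsion-free points parametrized by $A$; more to the point, it would contain invertible sheaves $L$ that are ``$p$-divisible'' in the sense that $L \isom M^{\otimes p}$ for infinitely many (indeed a full abelian variety's worth of) classes $M$, since multiplication by $p$ is surjective on $A(k)$. The key computational input is the projection formula: $F_* L \isom F_*(M^{\otimes p}) \isom (F_* \Oo_X) \otimes M$. By Lemma~\ref{sm-fs} we have $F_* \Oo_X = \bigoplus_i L_i$, so $F_* L \isom \bigoplus_i (L_i \otimes M)$; this is automatically a sum of invertible sheaves, so the hypothesis gives no direct contradiction this way. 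Instead I would look at what the hypothesis says for $L \in \Pic^0 X$ \emph{itself}: writing $F_* L = \bigoplus_j N_j$, we compare with $F_* \Oo_X = \bigoplus_i L_i$. The rank is $p^{\dim X}$ in both cases. The crucial point is to pin down the classes $N_j$: since $L$ is numerically trivial, $F_* L$ is numerically equivalent to $F_* \Oo_X$, and by the splitting we can argue that the summands $N_j$ must be numerically equivalent to the $L_i$. On the other hand, $\det(F_* L) \isom \det(F_* \Oo_X) \otimes L^{?}$ — one computes $\det F_* L$ via the relative dualizing sheaf / the conductor, and the dependence on $L$ is through $L^{\otimes p^{\dim X}/p \cdot (\text{something})}$; matching determinants forces a relation that, iterated, shows $L$ is torsion of order prime to nothing useful unless $L = 0$. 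A cleaner route: take $L \in \Pic^0 X$ and apply the hypothesis to all powers; using that $\Pic^0 X$ is divisible and $\Hom$ into $F_*\Oo_X$ is rigid, deduce $L$ must be fixed by $F^*$ up to the finitely many summand-classes, forcing $L^{\otimes(p-1)}$ to lie in a finite set, hence $L$ torsion; but a numerically trivial torsion line bundle on a Frobenius split variety with $F_*\Oo_X$ locally free and $h^0(\Oo_X)=1$ must be trivial by a vanishing/counting argument as in Lemma~\ref{sm-fs}. I expect this determinant-and-numerical-class bookkeeping to be the main obstacle, precisely because the projection formula makes the naive application of the hypothesis vacuous, so one must extract the constraint from a finer invariant.

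Second, for the absence of $p$-torsion, suppose $L \in \Pic X$ satisfies $L^{\otimes p} \isom \Oo_X$ with $L \not\isom \Oo_X$. Then $F^* L \isom L^{\otimes p} \isom \Oo_X$ (as $F^* = (\ )^{\otimes p}$ on Picard groups in characteristic $p$), so by adjunction and the projection formula, $\Hom(L, F_* \Oo_X) = \Hom(F^* L, \Oo_X) = \Hom(\Oo_X, \Oo_X) = k \ne 0$; likewise $\Hom(F_* \Oo_X, L) = \Hom(\Oo_X, F^! L)$, and since $F$ is finite flat, $F^! L = F^*L \otimes \omega_F \isom \omega_F$, giving $\Hom(F_*\Oo_X, L) = H^0(\omega_F)$. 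Now write $F_* \Oo_X = \bigoplus_i L_i$ with, as in Lemma~\ref{sm-fs}, $L_0 \isom \Oo_X$ appearing once and $H^0(L_i) = 0$ for $i \ne 0$; then $\Hom(L, F_*\Oo_X) = \bigoplus_i H^0(L_i \otimes L^{-1})$. Since this is one-dimensional, exactly one summand $L_i \otimes L^{-1}$ has a section and it is effective; counting global sections after twisting by $L$ (using $L^{\otimes p} = \Oo_X$, so twisting by $L$ permutes the relevant lattice) one shows the summand with the section must be $L_0 = \Oo_X$, i.e. $L^{-1}$ is effective; symmetrically $L$ is effective; an effective invertible sheaf whose inverse is also effective on a connected proper scheme is trivial, contradiction.

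The cleanest organizing principle for both parts is therefore: the hypothesis, via Frobenius adjunction $\Hom(F^*A, B) = \Hom(A, F_* B)$, converts statements about torsion (where $F^*$ acts trivially or predictably) into vanishing statements about $\Hom$ between invertible sheaves and the \emph{fixed} decomposition $\bigoplus L_i$ of $F_* \Oo_X$; and the normalization $h^0(\Oo_X) = 1$ with $H^0(L_i) = 0$ for $i \neq 0$ from Lemma~\ref{sm-fs} pins everything down. I would present Lemma~\ref{noptorsion}'s proof in exactly this order: (i) record $F^* = p\cdot$ on $\Pic X$ and the adjunction; (ii) dispatch the $p$-torsion case by the effectivity-of-$L$-and-$L^{-1}$ argument; (iii) dispatch $(\Pic^0 X)_{red} = 0$ by reducing, via divisibility of an abelian variety, to producing torsion and then invoking (ii) together with the fact that $(\Pic^0 X)_{red}$ being a nonzero abelian variety would contain $p$-torsion — or, if one wants to avoid structure theory, by the determinant computation sketched above. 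The anticipated difficulty is entirely in step (iii): ruling out a positive-dimensional $(\Pic^0 X)_{red}$ requires one genuine new idea beyond adjunction, since line bundles there are $p$-divisible and the adjunction argument alone does not bite.
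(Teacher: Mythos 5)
Your adjunction argument for the $p$-torsion part does not close. Suppose $M$ is $p$-torsion of exact order $p$. Tensoring the decomposition $F_*\Oo_X=\bigoplus_i L_i$ by $M$ and using the projection formula gives $(F_*\Oo_X)\otimes M=F_*(F^*M)=F_*(M^{\otimes p})=F_*\Oo_X$, so by Krull--Schmidt the multiset $\{L_i\}$ is \emph{invariant} under $-\otimes M$. Since $\Oo_X$ occurs among the $L_i$, so do $M^{-1}, M^{-2},\dots$, hence also $M$ itself. Consequently the one-dimensional space $\Hom(M,F_*\Oo_X)=\bigoplus_i H^0(L_i\otimes M^{-1})$ is entirely accounted for by the summand $L_i\isom M$, and your claim that ``the summand with the section must be $L_0=\Oo_X$'' is exactly wrong: one gets $h^0(M^{-1})=0$ (and $h^0(M)=0$), no effectivity, and no contradiction. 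All the soft constraints from adjunction and $h^0$-counting are consistent with the existence of $p$-torsion; to kill it one needs a genuinely global numerical invariant. The paper's route is precisely the orbit structure you half-noticed: the multiplicity function $m(L',L)$ of summands of $F_*L$ is invariant under $L'\mapsto L'\otimes M$, the orbits have size $p$, and $\chi$ is constant on orbits because $M$ is numerically trivial; hence $p\mid\chi(F_*L)=\chi(L)$ for \emph{every} $L$, and iterating over powers of $F$ gives $\chi\equiv 0$, contradicting Serre vanishing for a high power of an ample sheaf (this is where projectivity is used).

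The first part has a second gap. Reducing $(\Pic^0 X)_{red}=0$ to the absence of $p$-torsion requires knowing that a nonzero $(\Pic^0 X)_{red}$ would have a nontrivial $p$-torsion \emph{point}; this fails for general abelian varieties in characteristic $p$ (a supersingular elliptic curve has $A[p](k)=0$), and prime-to-$p$ torsion is of no use since the lemma explicitly allows it (the paper removes it later by an \'etale cover). The missing input is that $X$ Frobenius split forces $(\Pic^0 X)_{red}$ to be Frobenius split (Joshi) and hence \emph{ordinary}, so that $A[p](k)\isom(\ZZ/p\ZZ)^{\dim A}$ is nontrivial whenever $\dim A>0$. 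Your alternative ``determinant bookkeeping'' for this step is not carried out and, as you concede, is the hard point; as written the proposal proves neither assertion.
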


\begin{proof} 
Since $X$ is smooth and Frobenius split by Lemma \ref{sm-fs}, by \cite[Theorem 8.2]{Joshi} we know that $(\Pic^0 X)_{red}$ is Frobenius split as well therefore \emph{ordinary} \cite[Remark 1.3.9 (ii)]{BrionKumar}. Recall that an abelian variety $A$ is ordinary if $A[p]\isom (\ZZ/p\ZZ)^{\dim A}$. Therefore the triviality of $(\Pic^0 X)_{red}$ will follow from the second assertion.

Suppose now that there exists a $p$-torsion invertible sheaf $M$ on $X$. We claim that this implies that for every invertible sheaf $L$ on $X$, $\chi(L):= \sum_{i\geq 0} (-1)^i \dim_k H^i(X, L)$ is divisible by $p$. 
Let $m(L', L)$ denote the multiplicity of $L'$ as a direct summand of $F_* L$. Then $m(L'\otimes M, L) = m(L', L)$: since $M$ is torsion, it suffices to prove that $m(L'\otimes M, L) \geq m(L', L)$ and if $L'^{\oplus m(L', L)}$ is a direct summand of $F_* L$, then 
$(L'\otimes M)^{\oplus m(L', L)}$ is a direct summand of $(F_* L)\otimes M = F_* (L\otimes F^* M) = F_* (L\otimes M^{p}) = F_* L$. Since the Euler characteristic of an invertible sheaf depends only on its numerical class (as seen, for example, using the Hirzebruch-Riemann-Roch formula) and $M$ is numerically trivial, we have $\chi(L') = \chi(L'\otimes M)$.
So $\chi(F_* L)$ is divisible by $p$, but $\chi(F_* L) = \chi(L)$ (because $F$ is affine). This proves our claim.

Now since for any invertible sheaf $L$, $p$ divides $\chi(L)$, we easily show by induction on $r$ that $p^r$ divides $\chi(F^r_* L) = \chi(L)$. Therefore $\chi(L)=0$ for all invertible sheaves $L$. This contradicts the fact that for $L$ ample and a large enough $m$, $H^i(X, L^m)=0$ for $i>0$ and $H^0(X, L^m)\neq 0$.
\end{proof}

\begin{remark}
The last step of the above proof requires $X$ to be projective. Trying to generalize Theorem \ref{thm-if} to proper or proper and $A_2$ varieties would require producing invertible sheaves of nonzero Euler characteristic. We do not know whether every smooth proper variety possesses an invertible sheaf with nonzero Euler characteristic. If $X$ is proper and $A_2$, $X$ can be embedded in a toric variety \cite{Wlodarczyk}, and one could restrict invertible sheaves from the ambient variety and use Hirzebruch-Riemann-Roch and intersection theory on the ambient variety to prove that the Euler characteristic would be nonzero. This requires showing that if $X$ is a closed irreducible smooth subvariety of a toric variety then the cohomology class of $X$ is nonzero. We do not know if this holds in general. 
\end{remark}

Let $X$ be as in Theorem \ref{thm-if}. By Lemma \ref{noptorsion} and the general structure of Picard groups of smooth projective varieties \cite{Kleiman}, we see that $\Pic X\isom \Lambda' \oplus T$ where $\Lambda'$ is free of finite rank and $T$ is finite of order prime to $p$. By the next Lemma (which is easy and well-known), we can then find a finite \'etale cover $\pi:Y\to X$ with Galois group $T$ such that the kernel of the induced map $\pi^*: \Pic X\to \Pic Y$ equals $T$. Let $\Lambda = \pi^* (\Pic X) \subseteq \Pic Y$ (note that $\Lambda\isom\Lambda'$, so $\Lambda$ is free of finite rank).

\begin{lemma}
Let $X$ be a connected proper algebraic variety, $H$ a finite subgroup of $\Pic X$ of order prime to the characteristic. Then there exists a finite \'etale map $f:Y\to X$ with $Y$ connected such that the kernel of the induced map $f^* : \Pic X\to \Pic Y$ equals $H$.
\end{lemma}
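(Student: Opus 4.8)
The plan is to realise $H$ as $\ker(f^*\colon \Pic X\to \Pic Y)$ for a suitable connected finite \'etale \emph{abelian} cover $f\colon Y\to X$, constructed by Kummer theory over the algebraically closed field $k$. Let $n$ be the exponent of $H$; by hypothesis $n$ is prime to $p=\chara k$, so a choice of primitive $n$-th root of unity identifies $\mu_n$ with the constant group scheme $\ZZ/n\ZZ$ over $k$, and the Kummer sequence $1\to\mu_n\to\mathbb{G}_m\xrightarrow{x\mapsto x^n}\mathbb{G}_m\to1$ is exact for the \'etale topology. Since $X$ is connected, proper and reduced over $k=\bar k$ we have $H^0(X,\mathbb{G}_m)=k^*$ and raising to the $n$-th power is surjective on $k^*$; hence the Kummer boundary is an isomorphism $H^1_{\text{\'et}}(X,\mu_n)\isom\Pic(X)[n]$, which I further identify with $\Hom_{\mathrm{cont}}(\pi_1^{\text{\'et}}(X,\bar x),\ZZ/n\ZZ)$. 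Thus the given $H\subseteq\Pic(X)[n]$ corresponds to a finite subgroup $\widetilde H$ of $\Hom(\pi_1,\ZZ/n\ZZ)$ with $\widetilde H\isom H$.

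Next I would take $f\colon Y\to X$ to be the connected Galois cover attached to the open normal subgroup $\pi_1(Y):=\bigcap_{\chi\in\widetilde H}\ker\chi$, with finite abelian Galois group $G=\pi_1(X)/\pi_1(Y)$; being \'etale over a variety, $Y$ is again a variety. Evaluation of characters embeds $G$ into $\Hom(\widetilde H,\mu_n)$, which is abstractly isomorphic to $H$, so $|G|$ divides $|H|$. The inclusion $H\subseteq\ker f^*$ is the easy half: each $L\in H$ has Kummer class a character $\chi_L\in\widetilde H$, and $\pi_1(Y)\subseteq\ker\chi_L$ by construction, so $\chi_L$ restricts to $0$ in $H^1_{\text{\'et}}(Y,\mu_n)$ and hence $f^*[L]=0$ in $\Pic Y$ by naturality of the Kummer boundary.

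The reverse inclusion $\ker f^*\subseteq H$ is the crux, and I would obtain it by a descent/order count. If $f^*M\isom\Oo_Y$, fix a trivialisation $\sigma$; since $Y$ is connected, proper and reduced over $\bar k$ we have $\Oo_Y(Y)^*=k^*$, so $\sigma$ fails to be $G$-equivariant by a homomorphism $c\colon G\to k^*$ determined by $g^*\sigma=c(g)\cdot\sigma$, and $c$ is independent of $\sigma$ (rescaling $\sigma$ by $\lambda\in k^*$ leaves it unchanged, as $g^*\lambda=\lambda$). The assignment $M\mapsto c$ is an injection $\ker f^*\hookrightarrow\Hom(G,k^*)$: if $c\equiv1$ then $\sigma$ is $G$-invariant, hence descends by \'etale descent to a nowhere-vanishing section of $M$ over $X$, so $M\isom\Oo_X$. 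Therefore $|\ker f^*|\le|G|$, and combined with $H\subseteq\ker f^*$ and $|G|\mid|H|$ this forces $\ker f^*=H$; as $Y$ is connected by construction, the proof is complete. The real obstacle is precisely pinning down $\ker f^*$ on the nose, which the order estimate above does; its ingredients --- $\Oo_Y(Y)^*=k^*$ and the Kummer/$\pi_1$ dictionary --- are standard but use $k=\bar k$ and $p\nmid|H|$ in an essential way. (If one wishes to avoid \'etale cohomology entirely, $Y$ can instead be built by hand: write $H=\bigoplus_j(\ZZ/n_j\ZZ)\,[L_j]$, choose isomorphisms $L_j^{\otimes n_j}\isom\Oo_X$, form the cyclic Kummer covers $Y_j=\Spec_X\bigl(\bigoplus_{i=0}^{n_j-1}L_j^{\otimes(-i)}\bigr)$ --- \'etale because $n_j$ is invertible --- and let $Y$ be a connected component of $Y_1\times_X\cdots\times_X Y_k$; the one extra point is that this fibre product is already connected, which is exactly the independence of the classes $[L_j]$.)
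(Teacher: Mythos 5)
Your proof is correct, but it takes a genuinely different route from the paper's. The paper reduces to the case of a cyclic $H=\langle [L]\rangle$ with $L^n\isom\Oo_X$, takes the explicit cyclic cover $Y=\Spec_X\bigl(\bigoplus_{i=0}^{n-1}L^i\bigr)$, and identifies $\ker f^*$ \emph{exactly} by a purely sheaf-theoretic argument: $f^*M$ corresponds to the $A$-module $\bigoplus_i L^i\otimes M$, and by Atiyah's Krull--Schmidt theorem for coherent sheaves on a proper variety this is free over $A$ if and only if $M\isom L^j$ for some $j$. You instead work with the whole of $H$ at once via the Kummer sequence and the dictionary $\Pic(X)[n]\isom H^1_{\text{\'et}}(X,\mu_n)\isom\Hom(\pi_1^{\text{\'et}}(X),\ZZ/n\ZZ)$, take the connected Galois cover cut out by the corresponding characters, and pin down $\ker f^*$ by the sandwich $|H|\le|\ker f^*|\le|\Hom(G,k^*)|=|G|\le|H|$, where the middle inequality comes from the descent argument (a trivialization of $f^*M$ fails to be $G$-invariant by a character $G\to k^*$, and triviality of that character forces $M\isom\Oo_X$ by Galois descent, using $H^0(Y,\Oo_Y^*)=k^*$). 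Both arguments are sound and use properness, connectedness and $p\nmid|H|$ in essentially the same places; your version avoids the reduction to cyclic $H$ and the appeal to Krull--Schmidt, at the cost of invoking the \'etale fundamental group and Kummer theory, while the paper's version is more elementary and self-contained. One small point worth making explicit in your write-up: the assignment $M\mapsto c$ should be checked to be a group homomorphism on $\ker f^*$ (tensoring trivializations multiplies the characters), since the counting step uses that its image is a subgroup; this is immediate but currently implicit. Your closing parenthetical construction is in fact the paper's construction iterated, and the connectedness of the fibre product does hold for the reason you indicate, though as you note it is not needed for your main argument.
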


\begin{proof} Presenting $H$ as a direct sum of cyclic groups, we easily reduce to the case $H$ cyclic, generated by the class of an $n$-torsion invertible sheaf $L$. In this case, we construct $Y$ as the standard cyclic cover $Y = \Spec_X A$ where $A= \bigoplus_{i=0}^{n-1} L^i$ with an algebra structure fixed by choosing an isomorphism $L^n \isom \Oo_X$. The projection map $f:Y\to X$ is clearly finite \'etale, so we only need to check that the kernel of the pull-back map induced by $f$ on the Picard groups consists of the $L^j$. Under the usual correspondence between coherent sheaves on $Y$ and coherent sheaves of $A$-modules on $X$, the pull-back $f^* M$ corresponds to $\bigoplus_{i=0}^{n-1} L^i \otimes M$, which is a free $A$-module for $M \isom L^j$ for some $j$. On the other hand, if $\bigoplus_{i=0}^{n-1} L^i \otimes M$ is a free $A$-module, then 
\[ \bigoplus_{i=0}^{n-1} L^i \otimes M \isom A = \bigoplus_{i=0}^{n-1} L^i \]
as $\Oo_X$-modules. By the Krull-Schmidt theorem \cite[Theorem 3]{Atiyah}, as invertible sheaves are indecomposable $\Oo_X$-modules, the summands above have to be the same up to permutation, hence $M \isom L^j$ for some $j$.
\end{proof}

Let $L$ be an invertible sheaf on $Y$ whose isomorphism class belongs to $\Lambda$. We claim that $F_* L$ is a direct sum of invertible sheaves whose isomorphism classes also belong to $\Lambda$. We have $L\isom \pi^* M$ for some invertible sheaf $M$ on $X$. By \cite[XIV=XV \S{}1 $n^\circ$ 2, Pr. 2(c)]{SGA5}, the diagram 
\[
\begin{tikzpicture}[description/.style={fill=white,inner sep=2pt}]
    \matrix (m) [matrix of math nodes, row sep=2.0em,
    column sep=2.5em, text height=1.5ex, text depth=0.25ex]
    {     Y  & Y \\
	X  & X \\ };
    \path[->,font=\scriptsize]
    (m-1-1) edge node[auto] {$ F_Y $} (m-1-2)
    (m-1-2) edge node[auto] {$ \pi $} (m-2-2);
    \path[->,font=\scriptsize]
    (m-1-1) edge node[auto] {$ \pi $} (m-2-1)
    (m-2-1) edge node[auto] {$ F_X $} (m-2-2);
\end{tikzpicture}
\]
is a cartesian diagram of flat morphisms. By flat base change \cite[Proposition 9.3]{Hartshorne}, $F_{Y*} L = F_{Y*} \pi^* M = \pi^* F_{X*} M$ is a direct sum of invertible sheaves. 

We denote by $R = \bigoplus_{\lambda \in \Lambda} R_\lambda$ the global  section ring of $(Y, \Lambda)$, that is
\[ R = \bigoplus_{L\in \Lambda} H^0(Y, L) \]
where the ring structure is fixed by arbitrarily choosing a collection of invertible sheaves whose classes form a basis of $\Lambda$.

\begin{lemma} \label{regular} 
Under the above assumptions
\begin{enumerate}
	\item $R$ is a flat $R^p$-algebra,
	\item $R$ is a finitely generated $k$-algebra,
	\item $R$ is regular.
\end{enumerate}
\end{lemma}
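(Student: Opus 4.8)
The plan is to realise $R$ as the ring of functions on a $\mathbb G_m^{\rho}$-torsor over $Y$ and to feed into it the splitting of $F_{Y*}$ of line bundles. Choose invertible sheaves $L_1,\dots,L_\rho$ on $Y$ representing a basis of $\Lambda$, set $L_\lambda=L_1^{\otimes\lambda_1}\otimes\cdots\otimes L_\rho^{\otimes\lambda_\rho}$, and let $g\colon \widehat Y=\Spec_Y\bigl(\bigoplus_{\lambda\in\Lambda}L_\lambda\bigr)\to Y$ be the associated $\mathbb G_m^{\rho}$-torsor, so that $R=\Gamma(\widehat Y,\Oo)$. As $Y$ is smooth and connected, $\widehat Y$ is a smooth integral $k$-scheme, so $R$ is a domain with $R_0=H^0(Y,\Oo_Y)=k$; and if $A$ is ample on $X$, then $D\mapsto(\pi^*A)^{\dim Y-1}\cdot D$ defines a linear form $\ell\colon\Lambda\to\ZZ$ strictly positive on every nonzero effective class, hence on the cone $\sigma\subseteq\Lambda_{\mathbb R}$ spanned by the $\lambda$ with $R_\lambda\neq0$; in particular $\sigma$ is strongly convex. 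Granting (1) and (2), part (3) is then immediate: $R$ is a reduced Noetherian $k$-algebra flat over $R^p$, hence regular by Kunz's criterion (applied at each maximal ideal).

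To prove (1), I would decompose $R=\bigoplus_{\bar a\in\Lambda/p\Lambda}N^{\bar a}$ with $N^{\bar a}=\bigoplus_{\mu\equiv\bar a\,(p\Lambda)}R_\mu$, each being a graded $R^p$-submodule. Fixing a representative $a$ of $\bar a$ and writing $\mu=a+p\nu$, the identification $L_\nu^{\otimes p}=F_Y^*L_\nu$ and the projection formula give
\[
R_\mu=H^0\bigl(Y,\,L_a\otimes F_Y^*L_\nu\bigr)=H^0\bigl(Y,\,(F_{Y*}L_a)\otimes L_\nu\bigr),
\]
and, by the discussion preceding the Lemma, $F_{Y*}L_a\isom\bigoplus_\xi L_\xi^{\oplus n_\xi}$ is a finite direct sum of invertible sheaves whose classes lie in $\Lambda$. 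Hence $N^{\bar a}\isom\bigoplus_\xi\bigl(\bigoplus_{\nu}H^0(Y,L_{\xi+\nu})\bigr)^{\oplus n_\xi}\isom R^{\oplus(\sum_\xi n_\xi)}$; the point, which uses naturality of the projection formula, is that this is an isomorphism of $R^p$-modules once the right-hand $R$ is viewed as an $R^p$-module through the Frobenius isomorphism $R\xrightarrow{\sim}R^p$. Thus $N^{\bar a}$ is $R^p$-free of finite rank $\sum_\xi n_\xi=\rank(F_{Y*}L_a)=p^{\dim Y}$, and summing over $\bar a$ shows $R$ is $R^p$-free of rank $p^{\dim Y+\rho}$.

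For (2), choose a homogeneous $R^p$-basis $f_1,\dots,f_M$ of $R$ (one exists because $R^p$ is graded with $(R^p)_0=k$ and $R$ is $R^p$-free, and it is finite, $R$ being $R^p$-free of the rank computed in (1)). Applying the Frobenius isomorphism $n$ times turns $R=\bigoplus_iR^pf_i$ into
\[
R=\bigoplus_{(i_0,\dots,i_{n-1})}R^{p^n}\cdot f_{i_{n-1}}^{\,p^{n-1}}\cdots f_{i_1}^{\,p}f_{i_0},
\]
a direct sum in which every $f_{i_{n-1}}^{\,p^{n-1}}\cdots f_{i_0}$ is a monomial in $f_1,\dots,f_M$. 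If $r\in R$ is homogeneous of degree $\lambda$, then in this decomposition the coefficient $c\in R^{p^n}$ of each monomial has degree $\lambda$ minus the degree of that monomial; since the monomial's degree lies in $\sigma$ and $\deg c\in p^n\Lambda$, applying $\ell$ gives $\ell(\deg c)\in p^n\ZZ_{\ge0}$ and $\ell(\deg c)\le\ell(\lambda)$. Choosing $n$ with $p^n>\ell(\lambda)$ forces $\deg c=0$, hence $c\in(R^{p^n})_0=k$, so $r\in k[f_1,\dots,f_M]$. As $r$ was arbitrary, $R=k[f_1,\dots,f_M]$ is finitely generated over $k$.

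The main obstacle I anticipate is not (1) or (3) -- given the preceding reduction, these amount to bookkeeping with the projection formula and a citation of Kunz -- but (2), i.e.\ seeing why $R$ is finitely generated at all, since the Cox ring of an arbitrary smooth projective variety need not be. The argument above is short, but it is exactly where the finite rank of $R$ over $R^p$ (hence the hypothesis on $X$) is genuinely used: it lets a homogeneous $R^p$-basis together with iterated Frobenius express every homogeneous element in finitely many fixed generators. A secondary, purely formal, point needing care is the claim in (1) that the identification of $N^{\bar a}$ respects the $R^p$-action and is not merely a bijection of graded pieces -- concretely, that multiplication by $s^p$ on $H^0(Y,L_a\otimes F_Y^*L_\nu)$ corresponds, under the projection formula, to $\mathrm{id}_{F_{Y*}L_a}\otimes(\cdot\, s)$ on $H^0(Y,(F_{Y*}L_a)\otimes L_\nu)$.
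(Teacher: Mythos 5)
Your proof is correct and follows essentially the same route as the paper: the decomposition of $R$ over $\Lambda/p\Lambda$ combined with the projection formula and the splitting of $F_{Y*}L_a$ for (1), iterated Frobenius descent on homogeneous degrees for (2), and Kunz's criterion for (3). Your part (2) is in fact slightly more careful than the paper's (which only says the descent ``cannot go on forever since $\Lambda$ is not infinitely $p$-divisible''), since the linear form $\ell$ coming from an ample class makes the termination quantitative; the verification that $R$ is a domain via the $\mathbb{G}_m^\rho$-torsor is also a welcome addition that the paper leaves implicit.
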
  

\begin{proof}
(1) For $\mu\in \Lambda/p\Lambda$, let $\mu'$ be an arbitrary representative of $\mu$ in $\Lambda$. Then $R$ decomposes as an $R^p$-module:
\[ R = \bigoplus_{\mu\in\Lambda/p\Lambda} M_\mu \quad\text{where}\quad M_\mu = \bigoplus_{\lambda\in\Lambda} H^0(Y, L_{\mu' + p\lambda}) \]
(here $L_\lambda$ is a chosen invertible sheaf representative of $\lambda\in\Lambda\subseteq \Pic Y$). Using the projection formula, the summands can be presented as:
\[ M_\mu = \bigoplus_{\lambda\in\Lambda} H^0(Y, (F_*L_{\mu'})\otimes L_\lambda)) = \bigoplus_{i=1}^{p^d} \bigoplus_{\lambda\in\Lambda} H^0(Y, L_{\lambda_i + \lambda}) = \bigoplus_{i=1}^{p^d} R^p(\lambda_i), \]
where $d=\dim Y$ (so $p^d = \deg F_Y$) and $F_* L_{\mu'} = \bigoplus L_{\lambda_i}$. Therefore all $M_\mu$ are free, and hence also $R$ is a free $R^p$-module. 

(2) Note the above reasoning exhibits $R$ as a \emph{graded} $R^p$-module (though not in a canonical way). We can therefore choose a system $f_i \in R_{\lambda_i}$ ($i=1,\ldots, k$) of homogeneous generators of $R$ over $R^p$. I claim that the $f_i$ generate $R$ as a $k$-algebra. Let $f\in R_\lambda$ be a homogeneous element of $R$. By assumption, we can write $f = \sum g_i^p f_i$ for some $g_i$ (which can be assumed to be homogeneous). We can again write $g_i = \sum h_{ij}^p g_j$ and so on, and this process cannot go on forever since $\Lambda$ is not infinitely $p$-divisible\footnote{Note the example $R = k[t^{1/p^\infty}]$ which is flat over $R^p$ but not Noetherian}.

(3) follows from (1)-(2) by Kunz's criterion \cite{Kunz}.
\end{proof}

\begin{lemma} \label{polynomial} 
Let $\Lambda$ be a finitely generated free abelian group and let $R = \bigoplus_{\lambda \in\Lambda} R_\lambda$ be a $\Lambda$-graded domain. Suppose that $m := \bigoplus_{\lambda\neq 0} R_\lambda$ is an ideal of $R$ and that $k:= R_0 = R/m$ is a field. If $R$ is regular, it is isomorphic, as a $\Lambda$-graded ring, to $k[t_1, \ldots, t_r]$ with the $t_i$ homogeneous.
\end{lemma}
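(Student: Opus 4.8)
The plan is to proceed by induction on the rank $r$ of $\Lambda$, extracting homogeneous generators one at a time from a homogeneous regular system of parameters at the maximal ideal $\mathfrak m$. First I would observe that since $R$ is $\Lambda$-graded with $R_0 = k$ a field and $\mathfrak m = \bigoplus_{\lambda\neq 0} R_\lambda$, the ring $R$ is in fact $\ast$-local in the sense of graded algebra: $\mathfrak m$ is the unique graded maximal ideal, and regularity of $R$ (which a priori means all localizations are regular local) forces $R_\mathfrak m$ to be a regular local ring. Crucially, for a graded ring all of whose nonzero elements outside $\mathfrak m$ are units (i.e. $R\setminus\mathfrak m = R_0\setminus\{0\}$), one has $\mathfrak m/\mathfrak m^2$ spanned by homogeneous elements, and a homogeneous minimal generating set $t_1,\dots,t_n$ of $\mathfrak m$ has $n = \dim R_\mathfrak m$; moreover $R$ is then itself regular (not just locally), generated over $k$ by $t_1,\dots,t_n$. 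I would lift a basis of $\mathfrak m/\mathfrak m^2$ to homogeneous $t_1,\dots,t_n\in\mathfrak m$ of degrees $\delta_1,\dots,\delta_n\in\Lambda$.

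Next I would show $R = k[t_1,\dots,t_n]$ as a $k$-algebra, with the $t_i$ algebraically independent. Since $R$ is a domain, Noetherian (finitely generated by the $t_i$, which follows from graded Nakayama applied degree-by-degree on the well-founded poset of degrees appearing in $R$ — here one uses that $\Lambda$ being a finitely generated free group of rank $r$, any submonoid of degrees is finitely generated so has no infinite descending chains once we restrict to the monoid generated by finitely many $\delta_i$), and regular of dimension $n$, while the surjection $k[T_1,\dots,T_n]\twoheadrightarrow R$ from a polynomial ring in $n$ variables is a map between domains of the same Krull dimension $n$, hence injective. So $R\cong k[t_1,\dots,t_n]$ as graded $k$-algebras, the grading on the left being by $\deg T_i = \delta_i$.

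It remains to identify $n$ with $r = \operatorname{rank}\Lambda$ and to check the grading is the standard one, i.e. that $\delta_1,\dots,\delta_n$ form a $\ZZ$-basis of $\Lambda$. Here I would use that $R$ is $\Lambda$-graded and that every $\lambda$ with $R_\lambda\neq 0$ is a non-negative integer combination of the $\delta_i$ (since $R$ is generated by the $t_i$ in degrees $\delta_i$ and $R_0 = k$), so the $\delta_i$ generate the submonoid $\Lambda^+ := \{\lambda : R_\lambda \neq 0\}$; but I also need that the grading is \emph{by all of} $\Lambda$ in an essential way — this is where I would invoke that $\Lambda$ should be taken to be generated by $\Lambda^+$ (otherwise the statement as literally quoted would be slightly off, and I expect the intended reading is that $R_\lambda\neq 0$ for a generating set of $\lambda$'s, e.g. because $\Lambda = \pi^*\Pic X$ with $R$ the section ring — in the application $\Lambda^+$ generates $\Lambda$ as a group). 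Granting $\Lambda = \ZZ\Lambda^+$, the $\delta_i$ generate $\Lambda$ as a group; and since $k[T_1,\dots,T_n]\cong R$ is $\Lambda$-graded with $T_i$ of degree $\delta_i$ and the monomials $T^a$ ($a\in\ZZ_{\geq0}^n$) form a $k$-basis of homogeneous elements, the map $\ZZ^n\to\Lambda$, $e_i\mapsto\delta_i$, must be injective (distinct monomials lying in the same graded piece would force a nontrivial relation $\sum a_i\delta_i = \sum b_i\delta_i$, but then $T^a$ and $T^b$ occupy the same degree and, being linearly independent over $k$, that is fine for a grading — so actually injectivity needs the extra input that $R_\lambda$ is \emph{one}-dimensional, which follows once we know $R$ is a polynomial ring only if the $\delta_i$ are independent). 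This circularity is the subtle point: I would break it by arguing directly that a $\Lambda$-graded polynomial ring $k[T_1,\dots,T_n]$ with $\deg T_i=\delta_i$ is the section ring of something normal/irreducible only when the $\delta_i$ are independent — equivalently, by noting that in the application $R$ is a domain whose field of fractions has transcendence degree $= \dim R = n$ over $k$, and the degree map $\ZZ^n \to \Lambda$ has image a finite-index subgroup of $\Lambda$ of rank $= n$, forcing $r = n$; finiteness of the index together with $R$ being normal (regular) and the grading group being exactly $\Lambda$ (no proper refinement) forces the index to be $1$, so $\delta_1,\dots,\delta_n$ is a basis and $R\cong k[t_1,\dots,t_r]$ with its standard $\Lambda$-grading. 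I expect the main obstacle to be exactly this last bookkeeping: cleanly extracting that the degrees of the generators form a basis of $\Lambda$ rather than merely a finite-index subgroup, which requires using normality of $R$ (from regularity) to rule out the quotient singularity that a non-saturated grading monoid would introduce.
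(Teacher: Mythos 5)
Your first two paragraphs are essentially the paper's argument: lift a homogeneous basis of $m/m^2$ to homogeneous elements $x_1,\ldots,x_r$ (with $r=\dim R$), obtain a degree\nobreakdash-preserving surjection $\psi\colon k[t_1,\ldots,t_r]\to R$ by induction on the degree, and get injectivity from $r=\dim k[t_1,\ldots,t_r]=\dim R+\height\ker\psi$ together with $\ker\psi$ being prime. Two small slips: it is not true that $R\setminus m=R_0\setminus\{0\}$ (e.g.\ $1+x\in k[x]$ lies outside $m$ but is neither homogeneous nor a unit) --- what you need is only that every \emph{homogeneous} element outside $m$ lies in $R_0\setminus\{0\}=k^\times$, which is what makes $m$ the unique graded maximal ideal and allows homogeneous lifts; and ``any submonoid of a finitely generated free abelian group is finitely generated'' is false. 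Termination of the induction should instead come from the fact that $S=\{\lambda:R_\lambda\neq 0\}$ satisfies $S\cap(-S)=\{0\}$ (a nonzero product landing in $R_0=k$ would make an element of $m$ a unit), so the finitely generated pointed monoid generated by the $\lambda_i$ admits an additive functional strictly positive on its nonzero elements, on which one inducts. These are repairable and at the same level of detail as the paper's ``easily seen by induction on the multidegree.''

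The real problem is your third paragraph. The lemma does not assert that the degrees $\lambda_1,\ldots,\lambda_r$ form a $\ZZ$-basis of $\Lambda$, nor that $r=\rank\Lambda$; ``isomorphic as a $\Lambda$-graded ring to $k[t_1,\ldots,t_r]$ with the $t_i$ homogeneous'' only means that $k[t_1,\ldots,t_r]$ carries the $\Lambda$-grading determined by $\deg t_i=\lambda_i$ and the isomorphism preserves degrees. The stronger claim you try to establish is in fact false under the stated hypotheses: take $\Lambda=\ZZ$ and $R=k[t]$ with $\deg t=2$; every hypothesis of the lemma holds, $R$ is a graded polynomial ring on one homogeneous generator, yet $2$ is not a basis of $\ZZ$, and no appeal to normality or saturation can change this. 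So the ``circularity'' you describe, and the proposed resolution via finite-index subgroups, address a non-issue and the argument there would not go through. Deleting that paragraph (and fixing the two slips above) leaves a correct proof that coincides with the paper's.
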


\begin{proof} 
Since $R$ is regular, $r:=\dim R$ is finite and the maximal ideal $m$ is generated by $r$ elements $(x_1, \ldots, x_r)$. We can assume that $x_i$ is homogeneous of degree $\lambda_i$ (pick a homogeneous basis of the $r$-dimensional vector space $m/m^2$ and take homogeneous parts of lifts). 
Consider the map $\psi: k[t_1, \ldots, t_r] \to R$ sending $k$ to $R_0\isom k$ and $t_i$ to $x_i$. It is easily seen to be surjective by induction on the multidegree. It is also injective since $r = \dim k[t_1, \ldots, t_r] = \dim R + \height \ker \psi = r + \height \ker \psi$.  
\end{proof}

\begin{proof}[Proof of Theorem \ref{thm-if}]
Let $\pi:Y\to X$ be the finite \'etale cover constructed previously, with $\Lambda = \pi^* \Pic X \subseteq \Pic Y$. Combining Lemmas \ref{regular} and \ref{polynomial} we see that the section ring of $(Y, \Lambda)$ is a polynomial ring. Because $\Lambda$ contains an ample class (take a pull-back of an ample class on $X$), $Y$ is toric by \cite[Theorem 1.5]{KedzierskiWisniewski}. Now $\pi:Y\to X$ is \'etale, $Y$ is rational, so $X$ is separably rationally connected, hence simply connected by \cite[Corollaire 3.6]{Debarre}, so $\pi$ is an isomorphism and $X$ is toric.
\end{proof}

\begin{remark}
Theorem 1.5 from \cite{KedzierskiWisniewski} assumes that the characteristic of $k$ is zero. This is however not necessary. Let us briefly outline the proof of this result in arbitrary characteristic.

As noted by Alper \cite[Remark 1.1]{Alper}, Luna's \'etale slice theorem \cite{Luna} works without change in case of linearly reductive groups, that is, it works for tori in characteristic $p$. Therefore the proof of \cite[Theorem 5.1]{KraftPopov} works without changes other than obvious simplifications as well. It follows that if $R$ is a polynomial ring, we can assume that $R = k[x_1, \ldots, x_r]$ with the $x_i$ homogeneous. Then $X = \Proj \bigoplus_{n\geq 0} R_{n\lambda}$, where $\lambda$ is an ample class, is a toric variety determined by the combinatorial data of the choice of $\lambda$ and the gradings of $x_i$. 
\end{remark}

\section{End remarks}

Let $X$ be a smooth projective variety over an algebraically closed field $k$ of characteristic $p>0$. We denote the absolute Frobenius morphism of $X$ by $F$. We can study the following properties of $X$:
\begin{itemize}
\item[(A)] For any invertible sheaf $L$ on $X$, $F_* L$ is a direct sum of invertible sheaves.
\item[(B)] $F_* \Oo_X$ is a direct sum of invertible sheaves.
\item[(C)] $F^s_* \Oo_X$ is a direct sum of invertible sheaves for $s\gg 0$.
\end{itemize}
Our Theorem \ref{thm-iff} says that property (A) holds if and only if $X$ is a toric variety.

\begin{obs} If $X$ satisfies (C), then $X$ need not necessarily be toric. \end{obs}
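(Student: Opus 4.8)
\section*{Proof proposal for the Observation}

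The plan is to prove the observation by producing a non-toric variety that satisfies (C); the cleanest choice is an ordinary abelian variety $A$ over $k$, and the simplest instance an ordinary elliptic curve $E$ (such curves exist over any algebraically closed field of characteristic $p$). It is smooth and projective, it is not toric --- a smooth projective toric curve is $\PP^1$, whereas $H^1(E,\Oo_E)\neq 0$ --- and I claim that $F^s_*\Oo_E$ is a direct sum of invertible sheaves for \emph{every} $s\geq 1$, and so a fortiori for $s\gg 0$. The same argument applies verbatim to any positive-dimensional ordinary abelian variety, so in fact even property (B) fails to characterize toric varieties; of course property (A) must fail for $E$, as it does by Theorem~\ref{thm-iff}.

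The mechanism is that Frobenius on an ordinary abelian variety is a torsor under a diagonalizable group scheme. Concretely, for $A$ ordinary of dimension $g$, the $s$-fold relative Frobenius $A\to A^{(p^s)}$ is a finite faithfully flat homomorphism, hence the fppf quotient of $A$ by its kernel $N_s$, a finite flat group scheme of order $p^{sg}$; since $N_s$ is infinitesimal and contained in $A[p^s]$ (as $[p^s]$ factors through $F^s$), it equals the connected component $A[p^s]^0$, which for $A$ ordinary (in the sense $A[p](\bar k)\isom(\ZZ/p\ZZ)^g$ used in the proof of Lemma~\ref{noptorsion}) is of multiplicative type, i.e.\ $N_s\isom\mu_{p^s}^g=\Spec k[(\ZZ/p^s\ZZ)^g]$. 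Thus $A\to A^{(p^s)}$ is a torsor under a finite diagonalizable group scheme, and --- by the same eigensheaf decomposition used in the proof of Theorem~\ref{thm-only-if}, here valid fppf-locally and descending --- its push-forward of $\Oo_A$ is a direct sum of $p^{sg}$ invertible sheaves on $A^{(p^s)}$. Since $k$ is perfect, the canonical projection $A^{(p^s)}\to A$ is an isomorphism of schemes and the $s$-fold absolute Frobenius of $A$ is the composite $A\to A^{(p^s)}\to A$; transporting the decomposition along it gives that $F^s_*\Oo_A$ is a direct sum of invertible sheaves on $A$, which is (C).

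The only real input --- and the step I expect to be the main obstacle to a fully self-contained proof --- is the identification $A[p^s]^0\isom\mu_{p^s}^g$ for ordinary $A$, which translates the elementary definition of ordinarity (étale $p$-torsion) into a statement about the infinitesimal part of $A[p^s]$. I would obtain it from the structure theory of $p$-divisible groups: ordinarity forces $A[p^\infty]$ to be an extension of an étale $p$-divisible group by one of multiplicative type, and over an algebraically closed field a multiplicative-type $p$-divisible group is Cartier dual to a constant one, hence $\isom\mu_{p^\infty}^g$, with $p^s$-torsion $\mu_{p^s}^g$. Alternatively one can argue more directly using the duality between Frobenius and Verschiebung, $(\ker F^s_A)^\dual\isom\ker V^s_{A^\dual}$ (Cartier duality), together with the facts that $A^\dual$ is again ordinary and that the Verschiebung of an ordinary abelian variety is étale; in the elliptic-curve case the self-duality of $E[p^s]$ under the Weil pairing makes this essentially immediate, and a short Riemann--Roch computation even identifies the summands, $F_*\Oo_E\isom\bigoplus_{i=0}^{p-1}L^{\otimes i}$ for the $p$-torsion line bundle $L$ corresponding to the $\mu_p$-torsor $E\to E^{(p)}$.
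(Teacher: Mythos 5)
Your proposal is correct, and the counterexample is the same one the paper uses (an ordinary abelian variety), but the verification that $F^s_*\Oo_A$ splits proceeds by a genuinely different mechanism. The paper argues ``softly'': ordinarity implies Frobenius splitting, so $\Oo_X\to F^s_*\Oo_X\to\Oo_X$ is the identity; tensoring this by a $p^s$-torsion invertible sheaf $L$ and using the projection formula ($F^s_*(F^{s*}L)=F^s_*(L^{p^s})=F^s_*\Oo_X$) exhibits every such $L$ as a direct summand of $F^s_*\Oo_X$; since ordinarity gives exactly $p^{s\dim X}=\rank F^s_*\Oo_X$ such sheaves, they exhaust the push-forward. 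You instead identify $\ker F^s$ with $\mu_{p^s}^g$ via the structure of $A[p^\infty]$ for ordinary $A$, view $A\to A^{(p^s)}$ as a torsor under a diagonalizable group scheme, and invoke the eigensheaf decomposition (the same device the paper uses in the proof of Theorem~\ref{thm-only-if} for the toric Frobenius). Your order/containment argument for $\ker F^s=A[p^s]^0\isom\mu_{p^s}^g$ is sound, and the descent of invertibility of the eigensheaves along an fppf trivialization is standard. What each approach buys: the paper's route needs the nontrivial input that ordinary abelian varieties are Frobenius split but otherwise only counts ranks; your route needs the $p$-divisible group structure theory but derives the splitting (and in particular the Frobenius splitting itself) from scratch, identifies the summands structurally as the line bundles attached to characters of the kernel, and visibly parallels the toric case, where the same diagonalizable-kernel picture is at work. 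Both arrive at the identical decomposition $F^s_*\Oo_X\isom\bigoplus_{L\in\Pic X[p^s]}L$.
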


For example, if $X$ is an ordinary abelian variety, then it is Frobenius split, i.e., $\Oo_X$ is a direct summand of $F_* \Oo_X$ and hence of all $F^s_* \Oo_X$. If $L$ is a $p^s$-torsion invertible sheaf on $X$, then tensoring $\Oo_X \to F^s_* \Oo_X \to \Oo_X$ by $L$ we get that $L$ is a direct summand of $(F^s_* \Oo_X)\otimes L = F^s_*(\Oo_X\otimes F^{s*} L) = F^s_*(L^{p^s}) = F^s_* \Oo_X$, that is, $F^s_* \Oo_X$ contains as a direct summand the direct sum of all $p^s$-torsion invertible sheaves. But since $X$ is ordinary, it has exactly $p^{s\cdot\dim X} = \rank F^s_* \Oo_X$ such invertible sheaves, hence 
\[ F^s_* \Oo_X \isom \bigoplus_{L\in \Pic X[p^s]} L. \] 

Another interesting fact is that (B) can hold for special non-toric Fano varieties:

\begin{obs}[{\cite[Theorem 2]{Achinger}}] For $p>2$ and $X$ a smooth quadric of even dimension $2n$, $F^s_* \Oo_X$ is a direct sum of invertible sheaves if and only if $p|n$ and $s=1$. \end{obs}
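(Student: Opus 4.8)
The plan is to reduce the statement to the behaviour of the spinor bundles of $X$, via the classification of vector bundles with vanishing intermediate cohomology on a smooth quadric. Throughout we take $n\ge 2$, so that $\Pic X\isom\ZZ$ is generated by $\Oo_X(1)$ (for $n=1$ one has $X\isom\PP^1\times\PP^1$, a case best set aside), and we may assume $X$ is the standard even quadric in $\PP^{2n+1}$ defined over $\mathbb F_p$, so that $F^{s*}\Oo_X(1)\isom\Oo_X(p^s)$. Since $\chara k=p>2$, the defining quadratic form may be diagonalised and $X$ carries its two spinor bundles $S^+$, $S^-$, each of rank $2^{n-1}$.

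First I would record the structural input. Since $F^s$ is finite and affine, the projection formula gives $H^i(X,(F^s_*\Oo_X)\otimes\Oo_X(a))=H^i(X,F^s_*(\Oo_X(ap^s)))=H^i(X,\Oo_X(ap^s))$, which vanishes for every $a$ and every $0<i<2n$, since line bundles on the even quadric have no intermediate cohomology (Koszul resolution on $\PP^{2n+1}$). Thus $F^s_*\Oo_X$ is an arithmetically Cohen--Macaulay bundle, and by Ottaviani's splitting criterion --- the analogue for quadrics of Horrocks' theorem, which in characteristic $\ne 2$ rests on Kn\"orrer periodicity and the classification of matrix factorisations of a nondegenerate quadratic form --- one has
\[ F^s_*\Oo_X\;\isom\;\bigoplus_a\Oo_X(a)^{\oplus\ell_a}\;\oplus\;\bigoplus_b\bigl(S^+(b)^{\oplus m^+_b}\oplus S^-(b)^{\oplus m^-_b}\bigr), \]
with the multiplicities well defined by Krull--Schmidt. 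Hence $F^s_*\Oo_X$ is a direct sum of invertible sheaves if and only if the total spinor multiplicity $\sum_{\pm,b}m^\pm_b$, a non-negative integer, is zero.

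The main step is to compute this multiplicity. One way: $a\mapsto\chi(X,(F^s_*\Oo_X)(a))=\chi(X,\Oo_X(ap^s))$ is the Hilbert polynomial of the quadric dilated by $p^s$, hence explicit, and the Hilbert polynomials of $\Oo_X(a)$ and of $S^\pm(a)$ are classical --- the two spinor bundles having \emph{equal} Hilbert polynomials --- so matching coefficients in the displayed decomposition is a finite linear system determining all the $\ell_a$ and all the sums $m^+_b+m^-_b$. A more transparent route is to use the projection formula $H^i(X,(F^s_*\Oo_X)\otimes(S^\pm)^\dual(b))=H^i(X,F^{s*}((S^\pm)^\dual(b)))$ and to compute the right-hand side from the spinor bundles' defining sequences $0\to S^\mp\to\Oo_X^{\oplus 2^n}\to S^\pm(1)\to 0$ pulled back along $F^s$ --- under which the middle map becomes a matrix of $p^s$-th powers of linear forms --- reducing everything recursively to dimensions of spaces of sections of twisted line bundles. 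I expect the answer to be an explicit non-negative integer vanishing precisely when $s=1$ and $p\mid n$: for $s=1$ the count should collapse, through the factor $1+t$ in the Hilbert series $\tfrac{1+t}{(1-t)^{2n+1}}$ of the even quadric, to a quantity congruent to $n$ modulo $p$, while for $s\ge 2$ it stays strictly positive. For the latter there is also a soft argument in the case $s=2$: if $p\mid n$ then $F_*\Oo_X=\bigoplus_i\Oo_X(a_i)$ with the exponents $a_i$ covering all residues modulo $p$ (as $p\mid n$ forces $2n\ge 2p$, so their range is at least $p$), while by Theorem \ref{thm-if} applied to the non-toric $X$ the sheaf $F_*\Oo_X(a)$ fails to be a direct sum of invertible sheaves for $a$ in some residue class; since $F^2_*\Oo_X=\bigoplus_iF_*(\Oo_X(a_i))$, one of these summands then carries a spinor bundle.

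The hard part will be exactly this multiplicity computation: producing a clean closed form for the spinor content of $F^s_*\Oo_{Q^{2n}}$ and verifying that its zero locus is precisely $\{\,s=1,\ p\mid n\,\}$. Its two ingredients --- the characteristic-$p$ validity of Ottaviani's criterion (unproblematic for $p>2$, and the reason $p=2$ is excluded, where even the spinor bundles behave differently) and a careful bookkeeping of the cohomology of spinor bundles under Frobenius pull-back --- are where the arithmetic relation between $p$ and the half-dimension $n$ genuinely surfaces; everything else is formal.
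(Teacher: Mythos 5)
First, a point of calibration: the paper offers no proof of this Observation --- it is imported wholesale from \cite{Achinger} --- so the only question is whether your sketch would actually establish it. Your structural reduction is the right one, and it is indeed the framework of the cited paper: $F^s_*\Oo_X$ is ACM by the projection formula together with the vanishing of the intermediate cohomology of $\Oo_X(ap^s)$, and by the Kn\"orrer/Buchweitz--Greuel--Schreyer classification of ACM bundles on a smooth quadric (valid for $p>2$; this classification, rather than Ottaviani's splitting criterion, is the tool actually needed) it decomposes into line bundles and twisted spinor bundles, so the theorem is equivalent to the statement that the total spinor multiplicity vanishes exactly on $\{s=1,\ p\mid n\}$. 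You are also right to excise $n=1$, where $S^\pm$ are themselves invertible and the statement as literally written fails.

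The gap is that the entire quantitative content of the theorem --- the computation of that multiplicity --- is deferred to an expectation. Three specific problems. (i) For $s=1$ you predict the spinor count is ``congruent to $n$ modulo $p$''; a congruence can only prove \emph{non}vanishing when $p\nmid n$, whereas the harder half of the $s=1$ case is that the multiplicity is exactly zero when $p\mid n$, which requires the closed form, not its residue class. (ii) Your soft argument for $s=2$ hinges on the unverified assertion that the twists $a_i$ occurring in $F_*\Oo_X$ realize every residue class mod $p$ with positive multiplicity; the interval of admissible twists having length $\ge p$ does not by itself guarantee that every intermediate twist actually occurs. (Its use of Theorem \ref{thm-if} only through the non-toricity of the quadric for $n\ge 2$ is fine.) (iii) Nothing is said about $s\ge 3$: knowing that $F^2_*\Oo_X$ contains a spinor summand $S(b)$ gives, by Krull--Schmidt, that $F^{s}_*\Oo_X$ contains $F^{s-2}_*(S(b))$ as a direct summand, but one must then show that a Frobenius push-forward of a spinor bundle is never a sum of line bundles --- i.e., the decomposition of $F_*S^\pm(i)$ has to be computed as well, an additional step your sketch does not mention but which the cited paper must carry out. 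Until the multiplicity formula is actually produced and its zero locus verified, what you have is a correct reduction, not a proof.
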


It would be interesting to have a structure theorem for all varieties satisfying (B) or (C) -- for example, that in the case (C) the Albanese map $f:X\to A$ makes $X$ into a toric fibration over an ordinary abelian variety (that is, $X$ is an \emph{equivariant embedding of an ordinary semiabelian variety}). 

\medskip
\noindent {\bf Acknowledgements. } 
The author would like to thank Morgan Brown, Nathan Ilten, Mateusz Michałek, Nicolas Perrin, Arthur Ogus, Martin Olsson and Jarosław Wisniewski for valuable suggestions. 

\bibliographystyle{amsalpha} 
\providecommand{\bysame}{\leavevmode\hbox to3em{\hrulefill}\thinspace}
\providecommand{\MR}{\relax\ifhmode\unskip\space\fi MR }
\providecommand{\MRhref}[2]{%
  \href{http://www.ams.org/mathscinet-getitem?mr=#1}{#2}
}
\providecommand{\href}[2]{#2}

\medskip
\noindent {\sc Piotr Achinger \\
Department of Mathematics \\
University of California, Berkeley \\
Berkeley, CA 94720, USA} \\

\noindent {\sc E-mail:} \texttt{achinger@math.berkeley.edu}

\end{document}